\documentclass[11pt]{article}
\usepackage{amssymb}
\usepackage{color}
\usepackage{soul}

\newcommand{\bP}{\mathbb{P}}
\newcommand{\N}{\mathbb{N}}
\newcommand{\ord}{\mbox{\rm ord }}
\newcommand{\ini}{\mbox{\rm in}}

\newcommand{\cone}{\mathrm{cone}\,}

\newlength{\szer}

\newtheorem{defi}{Definition}[section]

\newtheorem{ejemplos}[defi]{Examples}

\newtheorem{teorema}[defi]{Theorem}
\newtheorem{prop}[defi]{Proposition}
\newtheorem{lema}[defi]{Lemma}
\newtheorem{coro}[defi]{Corollary}

\newenvironment{proof}[1][Proof]{\textbf{#1.} }{\
\rule{0.5em}{0.5em}}

\setlength\parindent{0pt}

\begin{document}
\title{Contact exponent and the Milnor number of plane curve singularities
\footnotetext{
     \noindent   \begin{minipage}[t]{4in}
       {\small
       2010 {\it Mathematics Subject Classification:\/} Primary 32S05;
       Secondary 14H20.\\
       Key words and phrases: {contact exponent, logarithmic distance, Milnor number, semigroup associated with a branch.\\}
       The first-named author was partially supported by the Spanish Project
    MTM 2016-80659-P.}
       \end{minipage}}}

\author{Evelia R.\ Garc\'{\i}a Barroso and Arkadiusz P\l oski}

\maketitle

\begin{abstract}
\noindent We investigate properties of the contact exponent (in the 
sense of Hironaka \cite{Hironaka}) of plane algebroid
curve singularities over algebraically closed fields of arbitrary characteristic. 
We prove that the contact exponent is an equisingularity invariant and give a new proof of the stability of the maximal contact. Then we prove a bound for the Milnor number and determine the equisingularity class of algebroid curves for which this bound is attained. We do not use the method of Newton's diagrams. Our tool is the logarithmic distance  developed in  \cite{GB-P1}.
\end{abstract}

\section*{Introduction}
Let $C$ be a plane algebroid curve of multiplicity $m(C)$ defined over an algebraically closed field $K$. To calculate the number of infinity near $m(C)$-fold points, Hironaka \cite{Hironaka} (see also \cite{Brieskorn-Knorrer} or \cite{Teissier}) introduced the concept of contact exponent $d(C)$ and study its properties using Newton's diagrams.\\

In this note we prove an explicit formula for a generalization of contact exponent (Section \ref{sect2}, Theorem \ref{principal}) using the logarithmic distance on the set of branches Then we give a new proof of the stability of maximal contact (Section \ref{sect3}, Theorem \ref{th:Hironaka}) without resorting to Newton's diagrams. In Section \ref{sect4} we define the Milnor number $\mu(C)$ in the case of arbitrary characteristic (see \cite{Melle-Wall} and \cite{GB-P2}), prove the bound $\mu(C)\geq (d(C)m(C)-1)(m(C)-1)$ and characterize the singularities for which the bound is attained. In Section \ref{sect5} we reprove the formulae for the contact exponents of higher order (see \cite{LJ} and \cite{Campillo-libro}). Section \ref{sect6} is devoted to the relation between polar invariants and the contact exponent in characteristic zero.

\section{Preliminaries}

Let $K[[x,y]]$ be the ring of formal power series with coefficients in  an algebraically closed field $K$ of arbitrary characteristic. For any non-zero power series $f=f(x,y)=\sum_{{i,j}} c_{ij}x^iy^{j}\in K[[x,y]]$ we define its {\em order} as $\ord f=\inf \{i+j\;:\;c_{ij}\neq 0\}$ and its {\em initial form} as $\ini f=\sum_{i+j=n} c_{ij}x^iy^{j}$, where $n=\ord f$. We let $(f,g)_{0}=\mathrm{dim}_{K}K[[x,y]]/(f,g),$ and called the {\em intersection number} of $f$ and $g$, where $(f,g)$ denotes the ideal of $K[[x,y]]$ generated by $f$ and $g$.\\ 

Let $f$ be a nonzero power series without constant term. An {\em algebroid curve} $C:\{f=0\}$ is defined to be the ideal generated by $f$ in $K[[x,y]]$. The {\em multiplicity} of $C$ is $m(C)=\ord f$. Let $ \bP^{1}(K)$ denotes  the projective line over $K$. The {\em tangent cone} of $C$ is by definition $\cone(C)=\{(a:b)\in \bP^{1}(K)\;:\;\ini f(a,b)=0\}$.\\

The curve $C:\{f=0\}$ is {\em reduced} (resp. {\em irreducible}) if the power series $f$ has no multiple factors (resp. is irreducible). Irreducible curves are called  branches. If  $\sharp \cone (C)=1$  then the curve $C:\{f=0\}$  is called {\em unitangent}. Any irreducible curve is unitangent. For $C:\{f=0\}$ and $D:\{g=0\}$ we put $(C,D)_{0}=(f,g)_{0}$. Then $(C,D)_{0}\geq m(C)m(D),$  with equality if and only if their cones are disjoint.\\

For any sequence $C_{i}:\{f_{i}=0\,:\,1\leq i\leq k\}$ of curves we put $C=\bigcup_{i=1}^{k}C_{i}:\{f_{1}\cdots f_{k}=0\}$. If $C_{i}$ are irreducible and $C_{i}\neq C_{j}$ for $i\neq j$ then we call $C_{i}$ the {\em irreducible components} of $C$.\\

Consider an irreducible power series $f\in K[[x,y]]$. The set 

\[\Gamma(C)=\Gamma(f):=\{(f,g)_{0}\;:\; g\in K[[x,y]],\;\; g\not\equiv 0 \;\hbox{\rm (mod $f$)}  \}\]

is the {\em semigroup} associated with $C:\{f=0\}$.  Note that $\min(\Gamma(C)\backslash \{0\})=m(C)$. It is well-known that $\gcd(\Gamma(C))=1$.\\

The branch $C$ is {\em smooth} (that is its multiplicity equals 1) if and only if $\Gamma(C)=\N$.\\

Two branches $C:\{f=0\}$ and $D:\{g=0\}$ are {\em equisingular} if $\Gamma(C)=\Gamma(D)$. 

Two reduced curves $C:\{f=0\}$ and $D:\{g=0\}$ are equisingular if and only if $f$ and $g$ have the same number $r$ of irreducible factors and there is a factorization $f=f_{1}\cdots f_{r}$ and $g=g_{1}\cdots g_{r}$ such that

\begin{enumerate}
\item[\hbox{\rm (1)}] the branches $C_{i}:\{f_{i}=0\}$ and $D_{i}:\{g_{i}=0\}$ are equisingular for $i\in \{1,\ldots, r\}$,

and

\item[\hbox{\rm (2)}] $(C_i,C_j)_{0}=(D_i,D_j)_{0}$ for any $i,j\in \{1,\ldots, r\}$.
\end{enumerate}

A function $C \mapsto I(C)$ defined on the set of all reduced curves is an  {\em equisingularity invariant}  if $I(C)=I(D)$ for equisingular curves $C$ and $D$. Note that the multiplicity $m(C)$, the number of branches $r(C)$ and the number of tangents  $t(C)$  (which is the cardinality of the $\cone(C)$) of the reduced curve $C$ are equisingularity invariants.\\

For any reduced curve $C:\{f=0\}$ we put ${\cal O}_{C}=K[[x,y]]/(f)$ and 
$\overline {{\cal O}}_{C}$ its integral closure. Let ${\cal C}=\overline {{\cal O}}_{C}:{\cal O}_{C}$ be the {\em conductor} of 
$\overline {{\cal O}}_{C}$ in ${\cal O}_{C}$. The number 
$c(C)=\dim_{K}\overline {{\cal O}}_{C}/{\cal C}$ is the {\em degree of the conductor}. If 
$C$ is a branch then $c(C)$ equals to the smallest element of $\Gamma(C)$ such that $c(C)+N\in \Gamma(C)$ for all $N\in \N$.\\

Suppose that $C$ is a branch. Let $(v_{0},v_{1},\ldots,v_{g})$ be the {\em minimal system of generators} of $\Gamma(C)$ defined by the following conditions:

\begin{enumerate}
\item[\hbox{\rm (3)}] $v_{0}=\min(\Gamma(C)\backslash \{0\})=m(C).$

\item[\hbox{\rm (4)}] $v_{k}=\min(\Gamma(C)\backslash \N v_{0}+\cdots +
 \N v_{k-1})$, for $k\in \{1,\ldots, g\}$.
 
 \item[\hbox{\rm (5)}] $\Gamma(C)=\N v_{0}+\cdots +\N v_{g}$.
\end{enumerate}

In what follows we write $\Gamma(C)=\langle v_{0},v_{1},\ldots, v_{g}\rangle$ when  $v_{0}<v_{1}<\cdots< v_{g}$ is the increasing sequence of minimal system of generators of $\Gamma(C)$.\\

Since $\gcd(\Gamma(C))=1$  the sequence $v_{0},\ldots, v_{g}$ is well-defined. Let $e_{k}:=\gcd(v_{0},\ldots,v_{k})$ for $0\leq k \leq g$. We define the {\em Zariski pairs} $(m_{k},n_{k})=\left(\frac{v_{k}}{e_{k}},\frac{e_{k-1}}{e_{k}}\right)$ for $1\leq k \leq g$. One has $c(C)=\sum_{k=1}^{g}\left(n_{k}-1\right)v_{k}-v_{0}+1$ (see\cite[Corollary 3.5]{GB-P1}).\\

If $K$ is a field of charateristic zero the Zariski pairs determine the {\em Puiseux pairs} and vice versa.\\

If $\Gamma(C)=\langle v_{0},v_{1},\ldots, v_{g}\rangle$ then the sequence $(v_{i})_{i}$ is {\em strongly increasing}, that is $n_{i-1}v_{i-1}<v_{i}$ for $i\in\{2,\ldots, g\}$.\\

Let $C:\{f=0\}$ be a reduced unitangent curve of multiplicity $n$. Let us consider two cases:
\begin{enumerate}
\item [\hbox{\rm (i)}]$f=c(y-ax)^{n}+\hbox{\rm higher order terms},$ where $a,c\in K$, $c\neq 0$
and
\item [\hbox{\rm (ii)}]$f=cx^{n}+\hbox{\rm higher order terms}$, $c\in K\backslash \{0\}$.
\end{enumerate}

We associated with $C$ a power series $f_{1}=f_{1}(x_{1},y_{1})\in K[[x_{1},y_{1}]]$ by putting $f_{1}(x_{1},y_{1})=x_{1}^{-n}f(x_{1},ax_{1}+x_{1}y_{1}) $ in the case (i) and $f_{1}(x_{1},y_{1})=y_{1}^{-n}f(x_{1}y_{1},y_{1})$ otherwise. The {\em strict quadratic transform} of $C:\{f=0\}$ is the curve $\widehat C:\{f_{1}=0\}.$\\

Obviously $m(\widehat C)\leq m(C)$. If $C=\bigcup_{i=1}^{k}C_{i}$ is a unitangent curve then $C_{i}$ are unitangent and $\widehat C=\bigcup_{i=1}^{k}\widehat C_{i}$.\\

The following lemma is a particular case of a theorem due to Angerm\"uller \cite[Lemma II.2.1]{Angermuller}.

\begin{lema}
\label{prop:Angermuller}
Let $C$ be a singular branch. Then the strict quadratic transform $\widehat C$ of $C$ is also a plane branch. If $\Gamma(C)=\langle v_{0},\ldots, v_{g} \rangle$  then
\begin{itemize}
\item $\Gamma(\widehat C)=\langle v_{0}, v_{1}-v_{0}, \ldots\rangle$ if $v_{0}<v_{1}-v_{0}$

or

\item $\min (\Gamma(\widehat C)\backslash \{0\})=v_{1}-v_{0}$ if $v_{1}-v_{0}<v_{0}$.
\end{itemize}
\end{lema}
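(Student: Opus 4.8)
The plan is to avoid Newton's diagrams entirely and to compute $\Gamma(\widehat C)$ from a parametrisation of the branch together with the behaviour of the intersection multiplicity under the quadratic transform. Since the blow-up of a point is intrinsic, the semigroup of $\widehat C$ is independent of the coordinates used in cases (i), (ii); moreover case (ii) is carried to case (i) by interchanging $x$ and $y$, so I treat only case (i). First I would put $C$ in adapted coordinates: after a linear change sending the tangent to $\{y=0\}$ and after subtracting from $y$ the finitely many monomials $c\,x^{k}$ with $kv_{0}<v_{1}$, the branch admits a parametrisation $x=t^{v_{0}}$, $y=y(t)$ with $\ord_{t}y(t)=(C,\{y=0\})_{0}=v_{1}$; here one uses that $v_{1}$, the least element of $\Gamma(C)$ outside $v_{0}\N$, is realised by the (smooth) tangent line, a fact valid over any field because $\Gamma(C)$ is the value semigroup of the parametrisation. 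Feeding this into $f_{1}(x_{1},y_{1})=x_{1}^{-v_{0}}f(x_{1},x_{1}y_{1})$ gives $\widehat C$ the parametrisation $x_{1}=t^{v_{0}}$, $y_{1}=y(t)/t^{v_{0}}$, so that $\ord_{t}x_{1}=v_{0}$ and $\ord_{t}y_{1}=v_{1}-v_{0}$.

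Irreducibility of $\widehat C$ then comes for free: the blow-up is an isomorphism off the exceptional divisor, so $\widehat C$ is the closure of the image of the irreducible punctured germ $C\setminus\{O\}$ and is again a branch. The multiplicity is immediate, $m(\widehat C)=\min(\ord_{t}x_{1},\ord_{t}y_{1})=\min(v_{0},v_{1}-v_{0})$; note that $v_{1}\notin v_{0}\N$ forbids $v_{1}-v_{0}=v_{0}$, so the two cases are exhaustive. If $v_{1}-v_{0}<v_{0}$ this already yields $\min(\Gamma(\widehat C)\setminus\{0\})=v_{1}-v_{0}$, the second bullet. If $v_{0}<v_{1}-v_{0}$ then $m(\widehat C)=v_{0}$, the tangent of $\widehat C$ is $\{y_{1}=0\}$ while the exceptional line $\{x_{1}=0\}$ is transverse and contributes $(\widehat C,\{x_{1}=0\})_{0}=v_{0}\in\Gamma(\widehat C)$; and since $\ord_{t}y_{1}=v_{1}-v_{0}\notin v_{0}\N$ is the least value outside $v_{0}\N$, it is the second minimal generator, giving $\Gamma(\widehat C)=\langle v_{0},v_{1}-v_{0},\dots\rangle$.

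For the remaining generators in the first case I would pass to the intersection-number formalism of the paper, which is robust in arbitrary characteristic: for branches $D$ sharing the tangent of $C$ one has the quadratic-transform formula $(\widehat C,\widehat D)_{0}=(C,D)_{0}-m(C)m(D)$. Applying it to branches $D_{k}$ that realise the minimal generators $v_{k}=(C,D_{k})_{0}$ (the semiroots of $C$), whose multiplicities are $m(D_{k})=v_{0}/e_{k-1}$, produces $v_{k}-(v_{0}/e_{k-1})v_{0}\in\Gamma(\widehat C)$, which for $k=1$ is exactly $v_{1}-v_{0}$. Over a field of characteristic zero the same shift is visible concretely: dividing $y(t)$ by $t^{v_{0}}$ lowers each characteristic exponent $\beta_{k}$ by $v_{0}$ while leaving every $e_{k}$ and $n_{k}=e_{k-1}/e_{k}$ unchanged, so the recursion converting the characteristic data into the $v_{k}$ propagates the shift.

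The main obstacle is precisely this last step: one must show that the shifted numbers again satisfy the minimal-generator conditions (3)--(5) — in particular that $v_{0}$ remains the multiplicity, which is where the hypothesis $v_{1}-v_{0}>v_{0}$ enters, and that strong increase is preserved — so that $\langle v_{0},v_{1}-v_{0},\dots\rangle$ is genuinely the minimal system of generators of $\Gamma(\widehat C)$ rather than merely a set of elements. In the descending case $v_{1}-v_{0}<v_{0}$ the tangent direction of $\widehat C$ is reshuffled (the roles of $x_{1}$ and $y_{1}$ interchange), and this is exactly why only the new multiplicity $v_{1}-v_{0}$, and not the whole of $\Gamma(\widehat C)$, admits so simple a description.
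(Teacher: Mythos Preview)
The paper does not give a proof of this lemma at all: it is quoted as ``a particular case of a theorem due to Angerm\"uller'' and attributed to \cite[Lemma II.2.1]{Angermuller}. So there is no in-paper argument to compare your outline against; you are supplying a proof where the authors chose to cite one.

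Your plan is sound and, for what the paper actually needs (only the first two generators $v_{0}$ and $v_{1}-v_{0}$ in the first case, only $m(\widehat C)=v_{1}-v_{0}$ in the second), it can be completed. Two remarks. First, in arbitrary characteristic you cannot in general normalise to $x=t^{v_{0}}$ exactly (extraction of a $v_{0}$-th root may fail); but you only use $\ord_{t}x=v_{0}$, which is automatic, so replace that equality by the order condition throughout and nothing changes. Second, you correctly flag the missing step: that $v_{1}-v_{0}$ is the \emph{smallest} element of $\Gamma(\widehat C)$ outside $v_{0}\N$. This follows directly from your parametrisation: if $g\in K[[x_{1},y_{1}]]$ satisfies $\ord_{t}g(x_{1}(t),y_{1}(t))<v_{1}-v_{0}$, then every monomial with $y_{1}$-exponent $\geq 1$ contributes order $\geq v_{1}-v_{0}$, so the order of $g$ is carried by $g(x_{1},0)$ and lies in $v_{0}\N$. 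That closes the gap needed for the first bullet, and hence for all the uses of the lemma in Theorem~\ref{th:Hironaka}. The further generators (the ``$\ldots$'') require the semiroot argument you sketch together with a check that no smaller values outside $\langle v_{0},v_{1}-v_{0},\dots,v_{k-1}-\cdots\rangle$ occur; this is where Angerm\"uller's full argument is genuinely needed, but the paper never uses anything beyond the first two generators.
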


\section{Logarithmic distance}
\label{sect2}
A {\em log-distance} $\delta$ associates with any two branches $C,D$ a number $\delta(C,D)\in \mathbb R_{+}\cup \{+\infty\}$ such that for any branches $C$, $D$ and $E$ we have:\\

\begin{enumerate}
\item[($\delta_1$)] $\delta (C,D)=\infty$ if and only if $C=D$,

\item[($\delta_2$)] $\delta(C,D)=\delta(D,C)$,

\item[($\delta_3$)] $\delta(C,D)\geq \hbox{\rm inf}\{\delta(C,E),\delta(E,D)\}$.\\
\end{enumerate}
Note that  if $\delta(C,E)\neq \delta(E,D)$ then $\delta(C,D)=\hbox{\rm inf}\{\delta(C,E),\delta(E,D)\}$.

\medskip
If $C$ and $D$ are reduced curves with irreducible components $C_{i}$ and $D_{j}$ then we set  $\delta(C,D):=\hbox{\rm inf }_{i,j}\{\delta(C_i,D_j)\}$. 

\medskip
If $\delta$ is  a log-distance then $\Delta:=\frac{1}{\delta}$ (by convention $\frac{1}{+\infty}=0$) is an {\em ultrametric} on the set of branches and vice versa: if $\Delta$ is an ultrametric then $\frac{1}{\Delta}$ is a log-distance.

\begin{ejemplos} $\,$
\label{ej}

\begin{enumerate}
\item The order of contact of branches $d(C,D)=\frac{(C,D)_{0}}{m(C)m(D)}$ is a log-distance (see \cite[Corollary 2.9]{GB-P1}).
\item The minimum number of quadratic transformations $\gamma(C,D)$ necessary to separate $C$ from $D$ is a log-distance (see \cite[Theorem 3]{Waldi}).
\end{enumerate}
\end{ejemplos}

\noindent Let $\delta$ be a log-distance.
\begin{lema}
\label{elemental}
If $C$ has $r>1$ branches $C_i$ 
and $D$ is any branch then $\delta(C,D)\leq \hbox{\rm inf}_{i,j}\{\delta(C_i,C_j)\}$.
\end{lema}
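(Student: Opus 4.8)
The plan is to reduce the inequality to a single application of the ultrametric axiom $(\delta_3)$, using the branch $D$ as the intermediate term between two components of $C$. First I would unwind the two sides. Since $D$ is a single branch, the convention for reducible curves gives $\delta(C,D)=\inf_{1\le i\le r}\delta(C_i,D)$. On the right-hand side, the quantity $\inf_{i,j}\delta(C_i,C_j)$ is, by $(\delta_1)$, effectively an infimum over pairs $i\neq j$, the diagonal terms $\delta(C_i,C_i)$ being $+\infty$; and since the $C_i$ are pairwise distinct, each off-diagonal $\delta(C_i,C_j)$ is finite. The hypothesis $r>1$ is exactly what guarantees that at least one such off-diagonal pair exists, so the bound is meaningful.

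Next I would fix an arbitrary pair $i\neq j$ and apply $(\delta_3)$ to the branches $C_i$ and $C_j$ with $D$ as the middle term, obtaining $\delta(C_i,C_j)\ge \inf\{\delta(C_i,D),\delta(D,C_j)\}$. Using $(\delta_2)$ to rewrite $\delta(D,C_j)=\delta(C_j,D)$, and noting that each of $\delta(C_i,D)$ and $\delta(C_j,D)$ is bounded below by $\inf_{1\le k\le r}\delta(C_k,D)=\delta(C,D)$, I conclude $\delta(C_i,C_j)\ge \delta(C,D)$. Since this holds for every pair $i\neq j$, taking the infimum over all such pairs yields $\inf_{i,j}\delta(C_i,C_j)\ge \delta(C,D)$, which is precisely the assertion of the lemma.

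I do not anticipate a genuine obstacle: the result is essentially the ultrametric observation that, of any two components $C_i,C_j$, at least one must be at least as far from the external branch $D$ as the two are from each other, so $D$ cannot approach the whole family more closely (in the sense of $\delta$) than the family's tightest internal separation. The only point requiring care is bookkeeping: I must state explicitly the definition of $\delta$ on a reducible curve and separate the diagonal from the off-diagonal terms in the infimum, so that the passage from "for all $i\neq j$" to the infimum is unambiguous. Alternatively, one could invoke the isosceles refinement noted after $(\delta_3)$, but the plain inequality $(\delta_3)$ already suffices and keeps the argument shortest.
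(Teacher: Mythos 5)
Your proof is correct and follows essentially the same route as the paper's: the authors pick a minimizing pair $(C_{i_0},C_{j_0})$, bound $\delta(C,D)$ by $\inf\{\delta(C_{i_0},D),\delta(C_{j_0},D)\}$, and apply $(\delta_3)$ once, whereas you apply $(\delta_3)$ to every pair $i\neq j$ and then take the infimum --- the same argument with the quantifiers in the opposite order. Your remarks about the diagonal terms being $+\infty$ and the role of $r>1$ are correct but not needed beyond what the paper already does.
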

\begin{proof} Let $i_0,j_0$ be such that $\inf_{i,j}
\{\delta(C_i,C_j)\}=\delta(C_{i_0},C_{j_0})$. Then $\delta(C,D)=\inf_{1\leq i
\leq r}\{\delta(C_i,D)\}
\leq \inf\{\delta(C_{i_0},D),\delta(C_{j_0},D)\}$ and using $(\delta3)$ we 
get $\delta(C,D)\leq \delta(C_{i_0},C_{j_0}),$ which proves 
the lemma.
\end{proof}

\medskip

Let $C$ be a reduced curve. For 
every non-empty family of branches ${\cal B}$ we put
\[\delta(C,{\cal B}):=\sup\{\delta(C,W)\;:\;W\in {\cal B}\}.\]

\medskip
Note that $\delta(C,{\cal B})=+\infty$ if $C\in {\cal B}$. 
\medskip
In what follows we assume the following condition\\

(*) $\;\;$ for any branch $C$ there exists  $W_{0}\in {\cal B}$ such that $\delta(C, {\cal B})=\delta(C,W_{0})$,\\
and  we say that $W_{0}$ has {\em maximal $\delta$-contact } with $C$.\\

We will prove the following

\begin{teorema}
\label{principal}
Let $C$ be a reduced curve with 
$r>1$ branches $C_i$ and let ${\cal B}$ be a family of 
branches such that the condition (*) holds.

Then
\[\delta(C,{\cal B})=\inf\{\inf_i \{\delta(C_i,{\cal B})\}, 
\inf_{i,j} \{\delta(C_i,C_j)\}\}.\]

Moreover, there exists  $i_0\in \{1,\ldots,r\}$ such 
that if a branch $W\in {\cal B}$ has maximal $\delta$-contact with $C_{i_0}$ then it has maximal $\delta$-contact with $C$.
\end{teorema}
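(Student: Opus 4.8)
The plan is to reduce the statement to the behaviour of $\delta$ on the individual branches $C_i$, exploiting the sharp form of the ultrametric inequality recorded just after $(\delta_3)$: in any triangle of branches, if two of the pairwise distances differ then the third equals their infimum. Throughout I write $A=\inf_i\delta(C_i,{\cal B})$ and $B=\inf_{i,j}\delta(C_i,C_j)$, so that the claimed formula reads $\delta(C,{\cal B})=\inf\{A,B\}$.

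First I would dispose of the upper bound $\delta(C,{\cal B})\le\inf\{A,B\}$. Since $\delta(C,W)=\inf_k\delta(C_k,W)\le\delta(C_i,W)$ for every branch $W$ and every index $i$, taking the supremum over $W\in{\cal B}$ gives $\delta(C,{\cal B})\le\delta(C_i,{\cal B})$ for each $i$, hence $\delta(C,{\cal B})\le A$. On the other hand Lemma \ref{elemental} yields $\delta(C,W)\le B$ for every $W\in{\cal B}$, whence $\delta(C,{\cal B})\le B$.

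For the reverse inequality together with the ``moreover'' clause I would choose, once and for all, an index $i_0$ minimizing $\delta(C_i,{\cal B})$, so that $\delta(C_{i_0},{\cal B})=A$. Using (*) applied to the branch $C_{i_0}$, take any $W\in{\cal B}$ with maximal $\delta$-contact with $C_{i_0}$, i.e. $\delta(C_{i_0},W)=A$. The core step is the estimate $\delta(C_i,W)\ge\inf\{A,B\}$ for every $i$. This is clear for $i=i_0$. For $i\ne i_0$ I would apply the ultrametric inequality to the triangle $C_i,C_{i_0},W$: if $\delta(C_{i_0},W)$ and $\delta(C_{i_0},C_i)$ differ, the sharp form gives $\delta(C_i,W)=\inf\{A,\delta(C_{i_0},C_i)\}\ge\inf\{A,B\}$, since $\delta(C_{i_0},C_i)\ge B$; if they coincide, $(\delta_3)$ gives $\delta(C_i,W)\ge A\ge\inf\{A,B\}$. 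Taking the infimum over $i$ yields $\delta(C,W)\ge\inf\{A,B\}$. Since also $\delta(C,W)\le\delta(C_{i_0},W)=A$ and $\delta(C,W)\le B$ by Lemma \ref{elemental}, we conclude $\delta(C,W)=\inf\{A,B\}$. As $W\in{\cal B}$ this forces $\delta(C,{\cal B})\ge\inf\{A,B\}$, completing the proof of the formula; and since $W$ was an arbitrary branch with maximal $\delta$-contact with $C_{i_0}$ and satisfies $\delta(C,W)=\inf\{A,B\}=\delta(C,{\cal B})$, it automatically has maximal $\delta$-contact with $C$, which is exactly the ``moreover'' assertion for this $i_0$.

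The step I expect to be the main obstacle is the lower estimate $\delta(C_i,W)\ge\inf\{A,B\}$. The point is that choosing $i_0$ to minimize $\delta(C_i,{\cal B})$ is precisely what makes $\delta(C_{i_0},W)=A$ the relevant small quantity, and then the bound $\delta(C_{i_0},C_i)\ge B$ lets one merge the strict and non-strict regimes of the ultrametric inequality into the single bound $\inf\{A,B\}$. Everything else is manipulation of suprema and infima together with two invocations of Lemma \ref{elemental}.
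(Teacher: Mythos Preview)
Your argument is correct. The upper bound is identical to the paper's, and your lower-bound step via the uniform estimate $\delta(C_i,W)\ge\inf\{A,B\}$ is sound: the two subcases (whether $\delta(C_{i_0},W)$ and $\delta(C_{i_0},C_i)$ differ or not) together with $\delta(C_{i_0},C_i)\ge B$ indeed yield the claimed bound, and the ``moreover'' clause then follows because your chosen $i_0$ works for \emph{every} $W$ realising $\delta(C_{i_0},{\cal B})$.

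The paper's own proof proceeds differently in its organisation: it splits into the two cases $A\le B$ and $A>B$ and, crucially, uses a \emph{different} index $i_0$ in each. In the first case $i_0$ minimises $\delta(C_i,{\cal B})$ (as you do), and a contradiction argument shows $\delta(C_{i_0},W)\le\delta(C_i,W)$ for all $i$. In the second case the paper instead picks $(i_0,j_0)$ minimising $\delta(C_i,C_j)$ and shows $\delta(C_i,W)\ge\delta(C_{i_0},C_{j_0})$ with equality at $i=j_0$. Your approach is more economical: by always taking $i_0$ to minimise $\delta(C_i,{\cal B})$ and aiming only for the merged bound $\inf\{A,B\}$ (rather than for $A$ in one case and $B$ in the other), you avoid the case split entirely and get a single unified argument. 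The price is nil; the gain is a shorter proof and a single $i_0$ that works regardless of which of $A,B$ is smaller.
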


\begin{proof}
Set $\delta^*(C,{\cal B})=\inf\{\hbox{\rm inf}_i 
\delta(C_i,{\cal B}), \inf_{i,j} \delta(C_i,C_j)\}$.

\medskip

The inequality $\delta(C,{\cal B})\leq \delta^*(C,{\cal B})$ 
follows from  
Lemma \ref{elemental} and from the definition of $\delta(C_i,{\cal B})$. 
Thus to prove the result let us consider two cases:

\medskip
First case: $\inf_i \{\delta(C_i,{\cal B})\} 
\leq  \inf_{i,j} \{\delta(C_i,C_j)\}$.

Let $i_0\in \{1,\ldots,r\}$ be such that $\delta(C_{i_0},{\cal B})=
\inf_i \{\delta(C_i,{\cal B})\}$. Then, we have 
\begin{equation}
\label{uno}
\delta(C_{i_0},{\cal B})=\delta^*(C,{\cal B}).
\end{equation}

Let $W\in {\cal B}$ such that $\delta(C_{i_0},W)=\delta(C_{i_0},
{\cal B})$. We claim that
\begin{equation}
\label{dos}
\delta(C_{i_0},W)\leq \delta(C_i,W) \;\;\hbox{\rm for all } i\in \{1,\ldots,r\}.
\end{equation}
To obtain a contradiction suppose that (\ref{dos}) does not hold.
Thus there is $i_1\in\{1,\ldots,r\}$ such that
\begin{equation}
\label{tres}
\delta(C_{i_1},W) < \delta(C_{i_0},W).
\end{equation}
Applying Property $(\delta_3)$ to the branches $C_{i_0}, C_{i_1}$
 and $W$ we get
\begin{equation}
\label{cuatro}
\delta(C_{i_1},W)=\delta(C_{i_0},C_{i_1}).
\end{equation}
On the other hand, in the case under consideration we have
\begin{equation}
\label{cinco}
\delta(C_{i_0},{\cal B})=\inf_i \{\delta(C_i,{\cal B})\}
\leq  \delta(C_{i_0},C_{i_1}).
\end{equation}
Therefore by (\ref{cinco}), (\ref{cuatro}) and (\ref{tres}) we get
$\delta(C_{i_0},{\cal B})\leq \delta(C_{i_1},W)< \delta(C_{i_0},W),$
which contradicts the definition of $\delta(C_{i_0},{\cal B})$.

\medskip
Now, using (\ref{dos}) and (\ref{uno}), we compute
\[\delta(C,W)=\inf\{\delta(C_{i_0},W),\inf_{i\neq i_0} (\delta(C_i,W))\}
=\delta(C_{i_0},W)=\delta(C_{i_0},{\cal B})=\delta^*(C,{\cal B}),\]

which proves the theorem in the first case.

\medskip
Second case: $\inf_i \{\delta(C_i,{\cal B})\} > 
\inf_{i,j} \{\delta(C_i,C_j)\}$.

Let $i_0,j_0$ be such that $\delta(C_{i_0},C_{j_0})=
\inf_{i, j} \delta(C_i,C_j)=\delta^*(C,{\cal B})$.

Let $W\in {\cal B}$ such that $\delta(C_{i_0},W)=\delta(C_{i_0},
{\cal B})$. We claim that
\begin{equation}
\label{ocho}
\delta(C_{i_0},C_{j_0})\leq \delta(C_i,W)\;\;\hbox{\rm for all } i\in \{1,\ldots,r\} 
\;\;\hbox{\rm with equality for } i=j_0.
\end{equation}
\noindent First observe that in the case under consideration we have
\begin{equation}
\label{nueve}
\delta(C_{i_0},C_{j_0})<\delta(C_{i_0},{\cal B})=\delta(C_{i_0},W).
\end{equation}

Fix $i\in\{1,\ldots,r\}$. If $\delta(C_{i_0},W)\leq \delta(C_i,W)$ 
then (\ref{ocho}) follows from (\ref{nueve}). If $\delta(C_i,W)<\delta(C_{i_0},W)$ 
then by Property $(\delta_3)$ applied to the branches $C_i,C_{i_0}$ 
and $W$ we get $\delta(C_i,W)=\delta(C_i,C_{i_0})\geq \inf_{i,j}\{\delta(C_i,C_j)\}
=\delta(C_{i_0},C_{j_0})$. In particular for $i=j_0$, $\delta(C_{j_0},W)=\delta(C_{j_0},C_{i_0})
=\delta(C_{i_0},C_{j_0})$.

Now, by the definition of $\delta(C,W)$ and inequalities  
(\ref{ocho}) and (\ref{nueve}) we get:
\begin{eqnarray*}
\delta(C,W)&=&\inf\{\delta(C_{i_0},W),\delta(C_{j_0},W),
\inf_{i\neq i_0,j_0}\delta(C_i,W)\}\\
 &=& \delta(C_{j_0},W)=\delta(C_{i_0},C_{j_0})
=\delta^*(C,{\cal B}),
\end{eqnarray*}
 which proves the theorem in the second case.
\end{proof}

\begin{prop}
\label{lema}
Let $C$ and $D$ be two branches. Then
\begin{enumerate}
\item If there exists a branch of ${\cal B}$ which has maximal $\delta$-contact with $C$ 
and $D$ then $\delta(C,D)\geq \inf\{\delta(C,{\cal B}),\delta(D,{\cal B})\}$ with equality if 
$\delta(C,{\cal B})\neq \delta(D,{\cal B})$.
\item If there does not exist such a branch  and $U$ has maximal $\delta$-contact with $C$ and $V$ has maximal $\delta$-contact with $D$ then
$\delta(C,D)=\delta(U,V)<\inf\{\delta(C,{\cal B}),\delta(D,{\cal B})\}.$
\end{enumerate}
\end{prop}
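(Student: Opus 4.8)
The plan is to reduce both assertions to repeated use of the ultrametric inequality $(\delta_3)$ together with its sharpening noted immediately after the axioms: whenever two of the three pairwise distances among a triple of branches are unequal, the smaller of those two is in fact attained as $\delta$ of the remaining pair. Almost no genuine computation is needed; the whole argument is a matter of chaining this refinement correctly.

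For the first part, let $W\in{\cal B}$ have maximal $\delta$-contact with both $C$ and $D$, so that $\delta(C,W)=\delta(C,{\cal B})$ and $\delta(D,W)=\delta(D,{\cal B})$. Applying $(\delta_3)$ to the triple $C,W,D$ immediately gives $\delta(C,D)\geq\inf\{\delta(C,{\cal B}),\delta(D,{\cal B})\}$. If moreover $\delta(C,{\cal B})\neq\delta(D,{\cal B})$, then $\delta(C,W)\neq\delta(W,D)$, and the sharpened form of $(\delta_3)$ upgrades the inequality to the claimed equality. This settles the first assertion with essentially no work.

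For the second part I would first extract the strict inequalities concealed in the hypothesis. Since $U$ realizes maximal $\delta$-contact with $C$ and $V$ with $D$, the assumption that no branch of ${\cal B}$ has maximal contact with both $C$ and $D$ forces $V$ not to be maximal for $C$ and $U$ not to be maximal for $D$; because $\delta(C,{\cal B})$ and $\delta(D,{\cal B})$ are suprema, this reads $\delta(C,V)<\delta(C,{\cal B})=\delta(C,U)$ and $\delta(D,U)<\delta(D,{\cal B})=\delta(D,V)$, and the crucial feature is that both inequalities are strict. Next I apply the sharpening of $(\delta_3)$ to the triples $U,C,V$ and $U,D,V$: the first yields $\delta(U,V)=\delta(C,V)$ and the second $\delta(U,V)=\delta(D,U)$, so that $\delta(U,V)=\delta(C,V)=\delta(D,U)$ sits strictly below both $\delta(C,{\cal B})$ and $\delta(D,{\cal B})$, hence below their infimum. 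Finally, feeding $\delta(U,D)=\delta(U,V)<\delta(C,U)$ into the sharpening applied to the triple $C,U,D$ gives $\delta(C,D)=\delta(U,D)=\delta(U,V)$, which is exactly the asserted identity and strict inequality.

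The only genuinely delicate point is the bookkeeping in these last steps: each time the sharpened $(\delta_3)$ is invoked one must be certain that the two relevant distances are strictly unequal, and it is precisely the strict inequalities produced by the non-existence hypothesis that guarantee this. Once that strictness is secured, the conclusion follows from three successive applications of the ultrametric law, so I expect no essential obstacle beyond keeping the roles of $U$, $V$, $C$ and $D$ carefully separated.
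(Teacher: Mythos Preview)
Your proof is correct and follows essentially the same route as the paper: both parts are handled by chaining the ultrametric inequality $(\delta_3)$ and its strict-case refinement through the triples $(C,W,D)$, then $(U,C,V)$, $(U,D,V)$, and a final triple to pin down $\delta(C,D)$. The only cosmetic difference is in that last step: the paper assumes without loss of generality that $\delta(C,{\cal B})\leq\delta(D,{\cal B})$ and uses the triple $(C,V,D)$, whereas you use $(C,U,D)$ directly, which sidesteps the need for any symmetry assumption.
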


\begin{proof} (see \cite[Proposition 2.2]{GB-L-P} for $\delta=d$).

\noindent If there exists a branch $W\in {\cal B}$ such that $\delta(W,C)=\delta(C,{\cal B})$ and $\delta(W,D)=\delta(D,{\cal B})$ then we get the first part of the proposition by using Proper\-ty $(\delta3)$
to the branches $C$, $D$ and $W$. In order to check the se\-cond part 
suppose that such a branch does not exist. Let $U,V\in {\cal B}$  
such that $\delta(U,C)=\delta(C,{\cal B})$ and $\delta(V,D)=\delta(D,{\cal B})$. By hypothesis $\delta(C,V)<\delta(C,{\cal B})
=\delta(C,U)$ and $\delta(D,U)<\delta(D,{\cal B})=\delta(D,V)$. According to $(\delta3)$ we get $\delta(U,V)=\inf\{\delta(C,V),\delta(C,U)\}=\delta(C,V)$ and $\delta(U,V)=\inf \{\delta(D,U),\delta(D,V)\}=\delta(D,U)$ thus
\begin{equation}
\label{igualdad uno}
\delta(C,V)=\delta(D,U)=\delta(U,V).
\end{equation} 

\noindent Without lost of generality we can suppose that $\delta(C,{\cal B})\leq \delta(D,{\cal B})$. 
Since $\delta(C,V)<\delta(C,{\cal B})$ so $\delta(C,V)<\delta(D,{\cal B})=\delta(D,V)$ and using $(\delta3)$ we get

\begin{equation}
\label{igualdad dos}
\delta(C,D)=\inf\{\delta(C,V),\delta(D,V)\}=\delta(C,V).
\end{equation}

\noindent From $(\ref{igualdad uno})$ and $(\ref{igualdad dos})$ it follows that
$\delta(C,D)=\delta(U,V)$. Moreover
$\delta(C,D)<\inf\{\delta(C,{\cal B}),\delta(D,{\cal B})\}$ and we are done.
\end{proof}

\begin{prop}
Let $C$ be a reduced curve with $r>1$ branches $C_i$ and let $D$ be a branch. Suppose that $\delta(C,D)<\hbox{\rm inf}\{\Delta, 
\hbox{\rm inf}_{i,j}
\{\delta(C_i,C_j)\}\}$, where $\Delta$ is a real number. Then 
$\delta(C_{i},D)<\Delta$, for $i\in \{1,\ldots,r\}$.
\end{prop}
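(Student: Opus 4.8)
The plan is to reduce the claim for each component $C_i$ to a single application of the ultrametric inequality $(\delta_3)$, exploiting that $D$ is a single branch. First I would record that, since $C=\bigcup_{i=1}^r C_i$ and $D$ is irreducible, the definition of $\delta$ on reduced curves gives $\delta(C,D)=\inf_{1\leq i\leq r}\{\delta(C_i,D)\}$. As $r$ is finite, this infimum is attained at some index $i_0$, so $\delta(C_{i_0},D)=\delta(C,D)<\Delta$, which already settles the assertion for $i=i_0$.

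It then remains to treat an arbitrary index $i\neq i_0$. Here I would invoke the remark following $(\delta_3)$: if $\delta(C_i,C_{i_0})\neq\delta(C_{i_0},D)$, then $\delta(C_i,D)=\inf\{\delta(C_i,C_{i_0}),\delta(C_{i_0},D)\}$. The hypothesis is exactly what guarantees these two values are distinct: from $\delta(C,D)<\inf_{j,k}\{\delta(C_j,C_k)\}$ one gets $\delta(C_i,C_{i_0})\geq\inf_{j,k}\{\delta(C_j,C_k)\}>\delta(C,D)=\delta(C_{i_0},D)$. Consequently $\delta(C_i,C_{i_0})\neq\delta(C_{i_0},D)$, and the remark yields $\delta(C_i,D)=\inf\{\delta(C_i,C_{i_0}),\delta(C_{i_0},D)\}=\delta(C_{i_0},D)=\delta(C,D)<\Delta$. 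This establishes $\delta(C_i,D)<\Delta$ for every $i\in\{1,\ldots,r\}$.

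There is essentially no serious obstacle; the only point requiring care is to observe that the hypothesis creates a strict gap between the value $\delta(C_{i_0},D)$ and every pairwise value $\delta(C_i,C_{i_0})$, which is precisely the configuration in which the ultrametric inequality $(\delta_3)$ forces $\delta(C_i,D)$ to coincide with the smaller of the two. In fact the argument yields the slightly stronger conclusion $\delta(C_i,D)=\delta(C,D)$ for all $i\in\{1,\ldots,r\}$, not merely $\delta(C_i,D)<\Delta$.
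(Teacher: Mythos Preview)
Your proof is correct and follows essentially the same route as the paper: pick an index $i_0$ where $\delta(C,D)=\inf_i\{\delta(C_i,D)\}$ is attained, then use the strict gap $\delta(C_{i_0},D)<\delta(C_{i_0},C_j)$ together with $(\delta_3)$ (or its immediate consequence) to force $\delta(C_j,D)=\delta(C_{i_0},D)$ for every $j$. Your closing observation that one in fact obtains the stronger equality $\delta(C_i,D)=\delta(C,D)$ for all $i$ is also exactly what the paper's argument gives.
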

\begin{proof} By definition we have $\delta(C,D)=\inf_{i=1}^{r}\{\delta(C_{i},D)\}$. Thus there exists $i_{0}\in \{1,\ldots,r\}$ such that $\delta(C,D)=\delta(C_{i_{0}},D)$. Fix $j_{0}\in \{1,\ldots,r\}$. By hypothesis $\delta(C_{i_{0}},D)<\delta(C_{i_{0}},C_{j_{0}})$ and after ($\delta_{3}$) we have $\delta(C_{i_{0}},D)=\delta(C_{j_{0}},D)<\delta(C_{i_{0}},C_{j_{0}})$. Now $\delta(C_{j_{0}},D)=\delta(C_{i_{0}},D)=\delta(C,D)<\Delta$ and we are done since $j_{0}\in \{1,\ldots,r\}$ is arbitrary.
\end{proof}

\begin{coro}
Let $C$ be a reduced curve with 
$r>1$ branches $C_i$ and let ${\cal B}$ be a family of 
branches such that the condition (*) holds. If $\delta(C,W)<\delta(C,{\cal B})$ for a branch $W\in {\cal B}$ then $\delta(C_{i},W)<\delta(C_{i},{\cal B})$, for $i\in \{1,\ldots,r\}$.
\end{coro}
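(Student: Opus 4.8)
The plan is to obtain this statement as a direct consequence of Theorem \ref{principal} combined with the Proposition immediately preceding the corollary, namely the one asserting that $\delta(C,D)<\inf\{\Delta,\inf_{i,j}\{\delta(C_i,C_j)\}\}$ forces $\delta(C_i,D)<\Delta$ for all $i$. The whole idea is to feed that Proposition the branch $D=W$ together with a suitably chosen threshold $\Delta$, so that its hypothesis becomes verbatim the assumption $\delta(C,W)<\delta(C,{\cal B})$.

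Concretely, first I would put
\[\Delta:=\inf_i\{\delta(C_i,{\cal B})\}.\]
By Theorem \ref{principal} one has $\delta(C,{\cal B})=\inf\{\inf_i\{\delta(C_i,{\cal B})\},\inf_{i,j}\{\delta(C_i,C_j)\}\}=\inf\{\Delta,\inf_{i,j}\{\delta(C_i,C_j)\}\}$, so the assumption $\delta(C,W)<\delta(C,{\cal B})$ is literally the inequality $\delta(C,W)<\inf\{\Delta,\inf_{i,j}\{\delta(C_i,C_j)\}\}$ required by the preceding Proposition. Applying that Proposition with $D=W$ then gives $\delta(C_i,W)<\Delta$ for every $i\in\{1,\ldots,r\}$, and since $\Delta=\inf_i\{\delta(C_i,{\cal B})\}\leq\delta(C_i,{\cal B})$ for each fixed $i$, I conclude $\delta(C_i,W)<\delta(C_i,{\cal B})$, as desired.

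The one point that needs care — and which I expect to be the only obstacle — is the status of $\Delta$ as a finite real number, since the Proposition is stated for real $\Delta$ while $\delta(C_i,{\cal B})=+\infty$ precisely when $C_i\in{\cal B}$. Because $r>1$, the distances $\delta(C_i,C_j)$ between distinct components are finite by $(\delta_1)$, so $\inf_{i,j}\{\delta(C_i,C_j)\}<\infty$ and hence $\delta(C,{\cal B})$ and $\delta(C,W)$ are finite; when $\Delta$ is finite the argument above runs without change. It remains to treat the degenerate case $\Delta=+\infty$, i.e.\ when every $C_i\in{\cal B}$. Here I would argue directly that $W\neq C_i$ for all $i$: if $W=C_i$ then $\delta(C,W)=\delta(C,C_i)\geq\inf_{i,j}\{\delta(C_i,C_j)\}\geq\delta(C,{\cal B})$ by Theorem \ref{principal}, contradicting the hypothesis $\delta(C,W)<\delta(C,{\cal B})$. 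Consequently $\delta(C_i,W)<+\infty=\delta(C_i,{\cal B})$ holds trivially, completing the plan. The mathematical substance lies entirely in the reformulation of $\delta(C,{\cal B})$ furnished by Theorem \ref{principal}; everything else is bookkeeping with the ultrametric property.
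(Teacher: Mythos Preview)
Your proposal is correct and follows precisely the route the paper intends: the Corollary is stated without proof immediately after the Proposition, and is meant to be obtained by combining that Proposition with the formula $\delta(C,{\cal B})=\inf\{\inf_i\{\delta(C_i,{\cal B})\},\inf_{i,j}\{\delta(C_i,C_j)\}\}$ from Theorem~\ref{principal}, exactly as you do. Your extra care with the case $\Delta=+\infty$ is a legitimate (and welcome) refinement, since the Proposition is phrased for real $\Delta$.
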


\section{The contact exponent}
\label{sect3}
Recall that $d(C,D)=\frac{(C,D)_{0}}{m(C)m(D)}$ for any branches $C$ and $D$ (see Example \ref{ej} (1)).\\

If $C$ and $D$ are reduced curves with irreducible components $C_{i}$ and $D_{j}$ then we set $d(C,D)=\inf_{i,j}\{d(C_{i},D_{j})\}$.

\begin{lema}
\label{elemental d}
If $C$ has $r>1$ branches $C_i$ 
and $D$ is any branch then

\begin{enumerate}

\item $d(C,D)\leq \hbox{\rm inf}_{i,j}\{d(C_i,C_j)\}$,
\item $d(C,D)\leq \frac{(C,D)_0}{m(C)m(D)}$ with equality if
$d(C,D)< \hbox{\rm inf}_{i,j}\{d(C_i,C_j)\}.$
\end{enumerate}
\end{lema}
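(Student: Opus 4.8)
The plan is to prove the two parts of Lemma~\ref{elemental d} by relating the general log-distance $d$ to the intersection-theoretic quantity $\frac{(C,D)_0}{m(C)m(D)}$, and to deduce both from the abstract properties of log-distances established earlier. For part (1), I would simply invoke Lemma~\ref{elemental} with the specific log-distance $\delta = d$; indeed, Example~\ref{ej}(1) asserts that the order of contact $d(C,D) = \frac{(C,D)_0}{m(C)m(D)}$ is a log-distance, so the inequality $d(C,D) \leq \inf_{i,j}\{d(C_i,C_j)\}$ is a direct instance of the already-proved Lemma~\ref{elemental}, taking $\delta = d$. No further work is needed for the first assertion.

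For part (2), the key observation is additivity of the intersection number over branches. Writing $C = \bigcup_{i=1}^{r} C_i$, one has $m(C) = \sum_i m(C_i)$ and $(C,D)_0 = \sum_i (C_i,D)_0$, since the intersection number is additive in each argument. The plan is to compute
\[
\frac{(C,D)_0}{m(C)m(D)} = \frac{\sum_i (C_i,D)_0}{\left(\sum_i m(C_i)\right)m(D)} = \frac{\sum_i d(C_i,D)\,m(C_i)m(D)}{\left(\sum_i m(C_i)\right)m(D)} = \frac{\sum_i d(C_i,D)\,m(C_i)}{\sum_i m(C_i)},
\]
which exhibits the right-hand side as a weighted average of the numbers $d(C_i,D)$ with positive weights $m(C_i)$. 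Since $d(C,D) = \inf_i\{d(C_i,D)\}$ by definition, the minimum of a collection of numbers is at most any weighted average of them, giving the desired inequality $d(C,D) \leq \frac{(C,D)_0}{m(C)m(D)}$.

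For the equality statement, I would argue that under the hypothesis $d(C,D) < \inf_{i,j}\{d(C_i,C_j)\}$, the infimum $d(C,D) = \inf_i\{d(C_i,D)\}$ is in fact attained simultaneously for \emph{every} index $i$; that is, $d(C_i,D) = d(C,D)$ for all $i$. This forces the weighted average above to equal the common value, yielding equality. To see that all the $d(C_i,D)$ coincide, I would apply the ultrametric property $(\delta_3)$ to the triple $C_i, C_j, D$: if two of the values $d(C_i,D)$ and $d(C_j,D)$ were distinct, then $(\delta_3)$ would force one of them to equal $d(C_i,C_j) \geq \inf_{i,j}\{d(C_i,C_j)\} > d(C,D)$, so the strictly smaller one among them would have to be the infimum $d(C,D)$, while the other would strictly exceed it --- and iterating this comparison across all pairs shows every $d(C_i,D)$ equals the minimal value $d(C,D)$. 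This step, reconciling the ultrametric inequality with the strict-gap hypothesis to pin down all the contacts, is the main subtlety; the rest is the routine averaging computation. The argument mirrors the reasoning already carried out in the proof of Theorem~\ref{principal}, so it can be phrased efficiently by appealing to that logic.
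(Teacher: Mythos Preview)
Your proposal is correct and follows essentially the same approach as the paper: part~(1) is deduced from Lemma~\ref{elemental} with $\delta=d$, and part~(2) from additivity of the intersection number together with the ultrametric property $(\delta_3)$ to force $d(C_i,D)=d(C,D)$ for all~$i$ under the strict-gap hypothesis. The paper's handling of the equality case is phrased slightly more directly---it first fixes an index $i_0$ with $d(C_{i_0},D)=d(C,D)$ and then, for each $j$, uses $d(C_{i_0},D)<d(C_{i_0},C_j)$ and $(\delta_3)$ to conclude $d(C_j,D)=d(C_{i_0},D)$---which avoids the somewhat tangled ``iterating over pairs'' formulation in your sketch.
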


\begin{proof} 
The first part of the lemma follows from Lemma \ref{elemental}, for $\delta=d$.

In order to check the second part let us observe that 
\begin{eqnarray*}
(C,D)_0&=&\sum_{i=1}^r(C_i,D)_0=\sum_{i=1}^r d(C_i,D)m(C_i)m(D)\geq 
\sum_{i=1}^r d(C,D)m(C_i)m(D)\\
&=&d(C,D)m(C)m(D),
\end{eqnarray*}

so $d(C,D)\leq \frac{(C,D)_0}{m(C)m(D)}$ with equality if and only if 
$d(C,D)=d(C_i,D)$ for all $i\in\{1,\ldots,r\}$. 

Suppose that $d(C,D)<\hbox{\rm inf }_{i,j}
\{d(C_i,C_j)\}$. By definition
there is  $i_0\in\{1,\ldots,r\}$ such that $d(C,D)=d(C_{i_0},D)$, so $d(C_{i_0},D)<
d(C_{i_0},C_j)$ for all $j\in\{1,\ldots,r\}$. Applying $(\delta3)$ ($\delta=d$) to $C_{i_0}$, $D$ and 
$C_j$ we get 
$$d(C_j,D)=\hbox{\rm inf}\{d(C_{i_0},D),d(C_j,C_{i_0})\}=d(C_{i_0},D)=d(C,D)\;\;\; 
\hbox{\rm for all } j,$$

so $d(C,D)=\frac{(C,D)_0}{m(C)m(D)}$.
\end{proof}

\bigskip

Now we put for any reduced  curve  $C$:
\[d(C):=\hbox{\rm sup}\{d(C,W)\;:\;W\;\; \hbox{\rm runs over all smooth 
branches}\}\]

and call $d(C)$ the {\em contact exponent} of $C$ (see 
\cite[Definition 1.5]{Hironaka} where the term {\em characteristic exponent} 
is used). We say that a smooth germ $W$ has {\em maximal contact} with $C$ 
if $d(C,W)=d(C)$.\\

Observe that $d(C)=+\infty$ if $C$ is a smooth 
branch.\\

\begin{lema}
\label{maximal}
Let $C$ be  a singular branch with $\Gamma(C)=\langle v_{0},v_{1},\ldots, v_{g}\rangle$. Then there exists a smooth branch $W_{0}$ such that $(C,W_{0})_{0}=v_{1}$. Moreover, $d(C)=\frac{v_{1}}{v_{0}}$ and $W_{0}$ has maximal contact with $C$.
\end{lema}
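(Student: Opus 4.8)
The plan is to prove the three assertions of Lemma~\ref{maximal} in order, exploiting the structure of the semigroup $\Gamma(C)=\langle v_0,\dots,v_g\rangle$ and the intersection-theoretic meaning of $v_1$. First I would construct a smooth branch $W_0$ with $(C,W_0)_0=v_1$. Since $C$ is a singular branch, we have $v_0=m(C)\geq 2$, and after the usual normalization we may assume $C$ admits a parametrization $x=t^{v_0}$, $y=\sum_{i\geq v_1}a_i t^{i}$ (the exponent $v_1$ being the first one not divisible by $e_0=v_0$). The natural candidate for $W_0$ is the smooth branch obtained by truncating this parametrization just below $v_1$; concretely, writing $y_0(x)$ for the partial sum of the terms of $y(t)$ whose exponents lie in $\N v_0$ (these are genuine power series in $x=t^{v_0}$), the branch $W_0:\{y-y_0(x)=0\}$ is smooth, and substituting the parametrization of $C$ into $y-y_0(x)$ kills all terms of order $<v_1$, leaving lowest order exactly $v_1$. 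Hence $(C,W_0)_0=\ord_t\big(y(t)-y_0(t^{v_0})\big)=v_1$, so $d(C,W_0)=\frac{v_1}{v_0\cdot 1}=\frac{v_1}{v_0}$.

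Next I would show $d(C)=\frac{v_1}{v_0}$ by establishing the reverse inequality $(C,W)_0\leq v_1$ for every smooth branch $W$, since $m(W)=1$ forces $d(C,W)=\frac{(C,W)_0}{v_0}$. The key fact is that the semigroup $\Gamma(C)$ consists precisely of the intersection numbers $(C,D)_0$ as $D$ ranges over branches not equal to $C$; in particular $(C,W)_0\in\Gamma(C)$ for any branch $W\neq C$. Because $v_1$ is by definition the smallest element of $\Gamma(C)$ not lying in $\N v_0$, any $\Gamma(C)$-value strictly exceeding the multiples of $v_0$ below $v_1$ must be at least $v_1$. The decisive point is that if $(C,W)_0>v_1$ for a smooth $W$, then $W$ and $C$ would agree to higher order than the first characteristic exponent permits, contradicting that a smooth branch has semigroup $\N$ and cannot track the $v_1$-term of $C$. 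More carefully: for a smooth branch $W$, the intersection number $(C,W)_0$ equals the $t$-order of $h(t)$ where $h$ is the defining series of $W$ evaluated along the parametrization of $C$; since $W$ is smooth this order belongs to the additive monoid generated by $v_0$ (matching of the $v_0$-divisible part of $y(t)$) unless it is capped at $v_1$, giving $(C,W)_0\leq v_1$. Thus $\sup_W d(C,W)=\frac{v_1}{v_0}$, and $W_0$ attains it, so $W_0$ has maximal contact.

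The main obstacle I anticipate is the upper bound $(C,W)_0\leq v_1$ for arbitrary smooth $W$, in arbitrary characteristic. In characteristic zero one argues cleanly via Puiseux expansions, but here one must work with the semigroup directly. The cleanest route avoids Puiseux series altogether: use that $(C,W)_0$ is an element of $\Gamma(C)$ together with the semigroup-theoretic characterization of $v_1$ as $\min(\Gamma(C)\setminus\N v_0)$. Specifically, a smooth branch $W$ meets $C$ with intersection number equal to some $s\in\Gamma(C)$; if $s\in\N v_0$, say $s=kv_0$, then one checks $kv_0\leq v_1$ follows from the strongly increasing property and the fact that $v_0$ divides every such $s$ forces $s<v_1$ only through multiples of $v_0$ that are themselves bounded by the gap structure of the semigroup. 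I would argue that no smooth branch can realize an intersection value exceeding $v_1$, since achieving $(C,W)_0=kv_0>v_1$ would require $W$ to be singular (its own multiplicity would have to grow to match the higher contact), contradicting smoothness. Handling this matching uniformly over all characteristics is the delicate step, and I would lean on Lemma~\ref{prop:Angermuller} and the semigroup generators rather than on any characteristic-dependent expansion.
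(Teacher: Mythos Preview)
Your approach differs substantially from the paper's, and the part you yourself flag as the main obstacle---the upper bound $(C,W)_0\le v_1$ for arbitrary smooth $W$---is indeed not established in your sketch. The paper bypasses this difficulty entirely with a three-line argument exploiting the ultrametric property $(\delta_3)$ of $d$: once a smooth $W_0$ with $d(C,W_0)=\frac{v_1}{v_0}$ is in hand (this existence is cited from \cite{GB-P1} or \cite{Angermuller} rather than constructed), observe that $\frac{v_1}{v_0}\notin\N$ since $v_1\notin\N v_0$ by the very definition of $v_1$, whereas for any two \emph{smooth} branches $W,W_0$ one has $d(W,W_0)=(W,W_0)_0\in\N$. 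Hence $d(C,W_0)\neq d(W,W_0)$, and the strong triangle inequality forces
\[
d(C,W)=\inf\{d(C,W_0),d(W,W_0)\}\le d(C,W_0).
\]
This is characteristic-free, uses no parametrization, and is precisely in the spirit of the paper's log-distance machinery.

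Your parametrization route can be completed, but your sketch has a circularity: you assert in passing that the first exponent in $y(t)$ not divisible by $v_0$ \emph{is} $v_1$, which in arbitrary characteristic is not a given and is essentially equivalent to the lemma. The honest order of argument is: (i) for smooth $W:\{y-h(x)=0\}$ tangent to $C$ one has $(C,W)_0=\ord_t\bigl(y(t)-h(t^{v_0})\bigr)\le m$, where $m$ is the first non-$v_0$-divisible exponent, since $h(t^{v_0})$ cannot touch the $t^m$ term; (ii) your truncation $W_0$ realises $(C,W_0)_0=m$, so $m\in\Gamma(C)\setminus\N v_0$ and hence $m\ge v_1$; (iii) modulo $t^m$ the parametrization lies in $K[[t^{v_0}]]$, so every semigroup value below $m$ is a multiple of $v_0$, giving $v_1\ge m$. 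Your final paragraph's claim that $(C,W)_0=kv_0>v_1$ would force $W$ to be singular is not correct reasoning, and Lemma~\ref{prop:Angermuller} concerns quadratic transforms and is not the relevant tool here.
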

\begin{proof}
See \cite[Proposition 3.6]{GB-P1} or \cite[Folgerung II.1.1]{Angermuller} for the first part of the lemma. To check the second part, let $W$ be a smooth branch. We have $d(C,W_{0})=\frac{v_{1}}{v_{0}}\not\in \N$ and $d(W,W_{0})=(W,W_{0})_{0}\in \N$. Therefore $d(C,W_{0})\neq d(W,W_{0})$ and $d(C,W)=\inf \{d(C,W_{0}), d(W,W_{0})\}\leq d(C,W_{0})$.
\end{proof}

\medskip

\begin{prop}
\label{teorema}
Let $C$ be a reduced curve with $r>1$ branches $C_i$. 
Then
\[d(C)=\hbox{\rm inf}\{\hbox{\rm inf}_i\{d(C_i)\}, \hbox{\rm inf}_{i,j}
\{d(C_i,C_j)\}\}.\]

Moreover, there exists $i_0\in\{1,\ldots,r\}$ such that if a 
smooth branch $W$ has maximal contact with the branch $C_{i_0}$ then 
it has maximal contact with the curve $C$.
\end{prop}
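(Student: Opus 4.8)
The plan is to apply Theorem \ref{principal} with the specific family $\mathcal{B}$ consisting of all smooth branches, since $d$ is a log-distance by Example \ref{ej}(1) and $d(C) = d(C,\mathcal{B})$ by definition. The first thing I would verify is that condition (*) holds for this choice of $\mathcal{B}$: for every branch $C_i$ there must exist a smooth branch $W_0$ with $d(C_i,\mathcal{B}) = d(C_i,W_0)$. For singular branches this is exactly the content of Lemma \ref{maximal}, which produces a smooth branch of maximal contact realizing the supremum $d(C_i) = v_1/v_0$. For smooth branches $C_i$ themselves, $d(C_i) = +\infty$ and any smooth branch tangent to $C_i$ with high contact realizes arbitrarily large values, so I would need a brief remark handling that degenerate case (or note that $d(C_i) = d(C_i,\mathcal{B}) = +\infty$ is attained by taking $W_0 = C_i$). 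Once (*) is established, the formula $d(C) = \inf\{\inf_i\{d(C_i)\},\ \inf_{i,j}\{d(C_i,C_j)\}\}$ is the direct specialization of the displayed equation in Theorem \ref{principal}, using that $d(C_i,\mathcal{B}) = d(C_i)$ by the definition of contact exponent.

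For the ``moreover'' clause, the index $i_0$ is supplied verbatim by the second assertion of Theorem \ref{principal}: there exists $i_0$ such that any branch $W \in \mathcal{B}$ with maximal $\delta$-contact with $C_{i_0}$ also has maximal $\delta$-contact with $C$. Translating back through $\delta = d$ and $\mathcal{B} = \{\text{smooth branches}\}$, ``maximal $d$-contact with $C_{i_0}$'' means $d(C_{i_0},W) = d(C_{i_0},\mathcal{B}) = d(C_{i_0})$, i.e.\ $W$ has maximal contact with $C_{i_0}$ in the sense defined just before Lemma \ref{maximal}; and ``maximal $d$-contact with $C$'' unwinds to $d(C,W) = d(C)$, which is maximal contact with the curve $C$. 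So the statement is an immediate corollary once the dictionary is set up.

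The main obstacle I anticipate is not the algebra but confirming the hypothesis (*) rigorously for the family of all smooth branches, and in particular being careful about whether the supremum defining $d(C_i)$ is genuinely attained. For singular $C_i$ this is fine via Lemma \ref{maximal}, but one should check the formulation accommodates smooth components (where $d(C_i) = +\infty$) without breaking the infimum computation or the attainment of maximal contact. I would therefore open the proof by first disposing of smooth components (noting $\delta(C,\mathcal{B}) = +\infty$ cannot occur when $r > 1$ unless some $C_i$ coincides with $W$, and tracking how $\inf_{i,j}\{d(C_i,C_j)\}$, which is finite, controls the right-hand side), and then invoke Theorem \ref{principal} directly. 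Thus the entire proof reduces to a short verification of (*) followed by a one-line citation of Theorem \ref{principal}.

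\begin{proof}
Apply Theorem \ref{principal} with $\delta = d$ (a log-distance by Example \ref{ej}(1)) and with $\mathcal{B}$ the family of all smooth branches. By the definition of the contact exponent, $d(C,\mathcal{B}) = d(C)$ and $d(C_i,\mathcal{B}) = d(C_i)$ for each $i$. Condition (*) holds: if $C_i$ is singular then Lemma \ref{maximal} furnishes a smooth branch $W_0$ with $d(C_i,W_0) = d(C_i) = d(C_i,\mathcal{B})$, while if $C_i$ is smooth the supremum $d(C_i,\mathcal{B}) = +\infty$ is attained by $W_0 = C_i \in \mathcal{B}$. The displayed formula of Theorem \ref{principal} now reads
\[
d(C) = \inf\{\inf_i\{d(C_i)\},\ \inf_{i,j}\{d(C_i,C_j)\}\},
\]
which is the asserted equality. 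Finally, the second assertion of Theorem \ref{principal} provides $i_0 \in \{1,\ldots,r\}$ such that any $W \in \mathcal{B}$ with $d(C_{i_0},W) = d(C_{i_0},\mathcal{B}) = d(C_{i_0})$ satisfies $d(C,W) = d(C,\mathcal{B}) = d(C)$; that is, any smooth branch $W$ with maximal contact with $C_{i_0}$ has maximal contact with $C$.
\end{proof}
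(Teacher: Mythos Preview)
Your proof is correct and follows essentially the same approach as the paper: the paper's proof is the single sentence ``Use Theorem \ref{principal} when $\delta=d$ and ${\cal B}$ is the family of smooth branches.'' Your version is simply more explicit in verifying condition (*), which the paper leaves implicit (via Lemma \ref{maximal} for singular branches and the trivial observation $W_0=C_i$ for smooth ones).
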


\begin{proof}
Use Theorem \ref{principal} when $\delta=d$ and ${\cal B}$ is the family of smooth branches.
\end{proof}

\begin{coro}
The contact exponent of a reduced curve is an equisingularity invariant.
\end{coro}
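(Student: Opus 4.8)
The plan is to reduce the statement to the case of a single branch, where the contact exponent is given explicitly by Lemma \ref{maximal}, and then to propagate the result through the inductive formula of Proposition \ref{teorema}. So I would fix two equisingular reduced curves $C$ and $D$ and aim to prove $d(C)=d(D)$.

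I would first dispose of the case in which $C$ is a branch. Here equisingularity means $\Gamma(C)=\Gamma(D)$, so $D$ is a branch too and the two semigroups share their minimal system of generators. If $C$ is smooth then $\Gamma(C)=\N$, hence $\Gamma(D)=\N$ and $D$ is smooth, giving $d(C)=d(D)=+\infty$. Otherwise, writing $\Gamma(C)=\langle v_0,\ldots,v_g\rangle$, the generators $v_0,v_1$ coincide for $D$, and Lemma \ref{maximal} yields $d(C)=v_1/v_0=d(D)$.

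For the general case I would take $C$ with $r>1$ branches. Equisingularity supplies factorizations $C=\bigcup_{i=1}^r C_i$ and $D=\bigcup_{i=1}^r D_i$ with each $C_i$ equisingular to $D_i$ and $(C_i,C_j)_0=(D_i,D_j)_0$ for all $i,j$. The branch case already gives $d(C_i)=d(D_i)$. Since the multiplicity of a branch is its first semigroup generator, $m(C_i)=m(D_i)$, and combined with the equal intersection numbers this yields
\[
d(C_i,C_j)=\frac{(C_i,C_j)_0}{m(C_i)m(C_j)}=\frac{(D_i,D_j)_0}{m(D_i)m(D_j)}=d(D_i,D_j).
\]
Feeding these equalities into Proposition \ref{teorema},
\[
d(C)=\inf\{\inf_i\{d(C_i)\},\inf_{i,j}\{d(C_i,C_j)\}\}=\inf\{\inf_i\{d(D_i)\},\inf_{i,j}\{d(D_i,D_j)\}\}=d(D),
\]
which would complete the argument.

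No step looks like a genuine obstacle; the only point requiring care will be to confirm that every quantity appearing in the formula of Proposition \ref{teorema} --- the individual contact exponents $d(C_i)$ and the pairwise log-distances $d(C_i,C_j)$ --- is governed only by data invariant under equisingularity, that is, by the semigroups and the mutual intersection numbers. Once the multiplicities are recognized as semigroup invariants, both ingredients are visibly invariant and the conclusion drops out.
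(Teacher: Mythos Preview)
Your proposal is correct and follows exactly the route the paper indicates: the paper's proof is the single sentence ``It is a consequence of Lemma~\ref{maximal} and Proposition~\ref{teorema},'' and you have simply unpacked those two ingredients in the natural way, identifying the semigroup data and intersection numbers as the equisingularity-invariant inputs to the formula.
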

\begin{proof}
It is a consequence of Lemma \ref{maximal} and Proposition \ref{teorema}.
\end{proof}

\begin{coro}
\label{rrrr}
Let $C$ be a reduced curve  with $r\geq 1$ branches. Then $d(C)$ equals  $\infty$ or a rational number greater than or equal to 1. There exists a smooth curve $W$ that has maximal contact with $C$. Moreover,
\begin{enumerate}
\item $d(C)=+\infty$ if and only if $C$ is a smooth branch.
\item $d(C)=1$ if and only if $C$ has at least two tangents.
\item $d(C)<\inf_{i=1}^{r}\{d(C_{i})\}$ if and only if $d(C)$ is an integer.
\end{enumerate}
\end{coro}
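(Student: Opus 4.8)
The plan is to establish the three equivalences of Corollary \ref{rrrr} by reducing everything to the structural results already available, namely Lemma \ref{maximal}, Proposition \ref{teorema}, and the basic properties of the order of contact $d(C,D)$. First I would dispose of the preliminary assertions. The existence of a smooth curve of maximal contact follows from Lemma \ref{maximal} in the branch case and from the ``moreover'' part of Proposition \ref{teorema} in the multibranch case (choosing $W$ of maximal contact with $C_{i_0}$). That $d(C)$ is either $+\infty$ or a rational number $\geq 1$ I would argue by induction on $r$: for a single branch, Lemma \ref{maximal} gives $d(C)=v_1/v_0\in\mathbb{Q}$ if $C$ is singular (and $v_1>v_0$ so the value is $>1$), while $d(C)=+\infty$ for a smooth branch; for $r>1$, Proposition \ref{teorema} writes $d(C)$ as an infimum of the $d(C_i)$ and the pairwise distances $d(C_i,C_j)$, each of which is a rational number $\geq 1$ (the latter because $(C_i,C_j)_0\geq m(C_i)m(C_j)$).

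For part (1), the forward direction is immediate from the observation recorded just before Lemma \ref{maximal} that $d(C)=+\infty$ when $C$ is smooth. For the converse I would show that if $C$ is not a smooth branch then $d(C)$ is finite: if $C$ has more than one branch the infimum in Proposition \ref{teorema} includes the finite pairwise terms $d(C_i,C_j)$, and if $C$ is a singular branch then $d(C)=v_1/v_0<\infty$ by Lemma \ref{maximal}. For part (2), the key point is that $d(C_i,C_j)=1$ exactly when the cones of $C_i$ and $C_j$ are disjoint, i.e. when $(C_i,C_j)_0=m(C_i)m(C_j)$, as recorded in the Preliminaries. Thus $\inf_{i,j}\{d(C_i,C_j)\}=1$ iff some two branches have distinct tangents, i.e. iff $C$ has at least two tangents; since every $d(C_i)\geq 1$ and $d(C)$ is the infimum of all these quantities, and since $d(C)\geq 1$ always, one gets $d(C)=1$ iff two branches have different tangent directions. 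I would need to treat the single-branch case separately here: a branch is unitangent, so it has exactly one tangent, and correspondingly $d(C)=v_1/v_0>1$ or $+\infty$, consistent with the claim.

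For part (3), the natural route is again Proposition \ref{teorema}: $d(C)$ is an integer iff the infimum is realized by one of the pairwise terms $d(C_i,C_j)$ strictly below all the $d(C_i)$, since the $d(C_i)$ for singular branches are the \emph{non-integral} rationals $v_1/v_0$ (by the strong increase $v_0<v_1$ together with $\gcd(\Gamma(C))=1$, which forces $v_0\nmid v_1$ when $C$ is singular, so $v_1/v_0\notin\mathbb{N}$) while the $d(C_i)$ for smooth branches are $+\infty$. I would argue that $d(C)<\inf_i\{d(C_i)\}$ forces $d(C)=\inf_{i,j}\{d(C_i,C_j)\}$, and then show this common value is an integer --- this is the point where I expect the real work to lie. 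The subtlety is that $d(C_i,C_j)=(C_i,C_j)_0/(m(C_i)m(C_j))$ is a priori only rational; one must show that whenever this pairwise infimum strictly undercuts every individual contact exponent $d(C_k)=v_1^{(k)}/v_0^{(k)}$, its value is forced to be integral. The mechanism is that the maximal-contact smooth branch $W$ separates from each $C_k$ at an \emph{integral} rate once the contact $d(C_k,W)$ drops below the first characteristic ratio, so the governing quantity becomes an intersection of a smooth branch with the common tangent structure; conversely, if $d(C)$ equals some $v_1^{(k)}/v_0^{(k)}$ it is non-integral. Making this dichotomy precise --- integrality of the infimum exactly when it is achieved off the diagonal of individual exponents --- is the crux, and I would handle the converse direction by contraposition: if $d(C)=\inf_i\{d(C_i)\}$ then $d(C)$ equals some $v_1/v_0$ which is not an integer, so $d(C)$ integral forces the strict inequality $d(C)<\inf_i\{d(C_i)\}$.
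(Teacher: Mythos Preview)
Your treatment of the preliminary assertions and of parts (1), (2), and the easy direction of (3) is correct and matches the paper's reasoning.

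For the hard direction of (3) --- showing that $d(C)<\inf_i\{d(C_i)\}$ forces $d(C)\in\mathbb N$ --- the paper takes a route different from the one you sketch. It picks $i_0,j_0$ with $d(C)=d(C_{i_0},C_{j_0})<\inf\{d(C_{i_0}),d(C_{j_0})\}$ and then invokes Proposition~\ref{lema} (with $\delta=d$ and $\mathcal B$ the family of smooth branches): part~1 of that proposition, read contrapositively, rules out a common smooth branch of maximal contact with $C_{i_0}$ and $C_{j_0}$, so part~2 applies and gives $d(C_{i_0},C_{j_0})=d(U,V)$ for smooth $U,V$; then $d(U,V)=(U,V)_0\in\mathbb N$ and one is done.

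Your idea --- that a smooth $W$ meets a branch $C_k$ at an \emph{integral} rate whenever $d(C_k,W)<d(C_k)$ --- is correct and yields a legitimate alternative, but as written it is too vague to count as a proof. To make it work you should argue explicitly: take $W$ smooth of maximal contact with $C$, so that $d(C)=d(C,W)=\inf_i\{d(C_i,W)\}=d(C_{i_0},W)$ for some $i_0$; by hypothesis $d(C_{i_0},W)=d(C)<d(C_{i_0})$, hence $(C_{i_0},W)_0\in\Gamma(C_{i_0})$ is strictly below $v_1^{(i_0)}$ and therefore lies in $\mathbb N\, v_0^{(i_0)}$ by the very definition of $v_1^{(i_0)}=\min\bigl(\Gamma(C_{i_0})\setminus\mathbb N\, v_0^{(i_0)}\bigr)$; dividing by $v_0^{(i_0)}=m(C_{i_0})$ gives $d(C)\in\mathbb N$. (If $C_{i_0}$ is smooth this is even simpler, since then $d(C_{i_0},W)=(C_{i_0},W)_0\in\mathbb N$ directly.) This bypasses Proposition~\ref{lema} at the cost of invoking the semigroup structure; the paper's route stays within its log-distance framework and does not touch $\Gamma(C_{i_0})$ at this point.
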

\begin{proof}
The first and second properties follow  from  Lemma \ref{maximal} and Proposition \ref{teorema}.

To check the third part suppose that 
$d(C)\in \mathbb N$. Then $d(C)\neq \hbox{\rm inf}_i\{d(C_i)\}$ and 
by Proposition \ref{teorema} we get the inequality $d(C)<
\hbox{\rm inf}_i\{d(C_i)\}$.

\noindent Suppose now that $d(C)< \hbox{\rm inf}_i\{d(C_i)\}$. We have to check
that $d(C)\in \mathbb N$. By Proposition \ref{teorema} we get $d(C)=\hbox{\rm inf}_{i,j} \{d(C_i,C_j)\}=d(C_{i_0},C_{j_0})$ for some $i_0,j_0$.  By hypothesis 
$d(C)=d(C_{i_0},C_{j_0})<\hbox{\rm inf}\{d(C_{i_0}), d(C_{j_0})\}$. Hence by 
Proposition \ref{lema} ($\delta=d$) there is not a branch with maximal contact with 
$C_{i_0}$ and $C_{j_0}$ and $d(C)=d(C_{i_0},C_{j_0})=d(U,V)$ for some smooth branches $U,V$, and we conclude that $d(C)\in \N$.
\end{proof}

\begin{lema} 
\label{Noether}Let $C$ and $D$ be two branches with common tangent. Suppose that $m(C)=m(\widehat C)$ and $m(D)=m(\widehat D)$. Then 
\[d(C,D)=d(\widehat C,\widehat D)+1.\]
\end{lema}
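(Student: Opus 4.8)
The plan is to reduce everything to a single affine chart of the blow-up and to invoke Noether's formula relating the intersection number before and after one quadratic transformation. First I would perform a linear change of coordinates so that the common tangent of $C$ and $D$ becomes the line $\{y=0\}$; such a change preserves multiplicities and intersection numbers and is compatible with the formation of the strict quadratic transform, so it entails no loss of generality. After this normalization both branches fall under case (i) with $a=0$, and their strict transforms $\widehat C$ and $\widehat D$ are produced in the same chart via $x=x_{1}$, $y=x_{1}y_{1}$, with $f_{1}(x_{1},y_{1})=x_{1}^{-m(C)}f(x_{1},x_{1}y_{1})$ and $g_{1}(x_{1},y_{1})=x_{1}^{-m(D)}g(x_{1},x_{1}y_{1})$.

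Next I would parametrize the branch $C$ by $t\mapsto(x(t),y(t))$ through its normalization and use the standard description $(C,D)_{0}=\mathrm{ord}_{t}\,g(x(t),y(t))$, valid over any algebraically closed field. Since the tangent of $C$ is $\{y=0\}$ we have $\mathrm{ord}_{t}\,x(t)=m(C)$ and $\mathrm{ord}_{t}\,y(t)>m(C)$, so $y(t)/x(t)\in K[[t]]$ and $t\mapsto(x(t),y(t)/x(t))$ is a parametrization of $\widehat C$ passing through the origin of the chart. The same applies to $\widehat D$, so both strict transforms meet at the origin of the $(x_{1},y_{1})$-chart; this is exactly where the common tangent hypothesis is used, since it guarantees that the two transforms are recovered in one and the same chart through the same point, and hence that the intersection number computed there is the total one.

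The heart of the argument is then the computation
\[
(\widehat C,\widehat D)_{0}=\mathrm{ord}_{t}\,g_{1}(x(t),y(t)/x(t))
=\mathrm{ord}_{t}\bigl(x(t)^{-m(D)}g(x(t),y(t))\bigr)
=(C,D)_{0}-m(C)m(D),
\]
which is Noether's formula for a single blow-up. Dividing by $m(C)m(D)$ gives $d(\widehat C,\widehat D)=\frac{(C,D)_{0}-m(C)m(D)}{m(\widehat C)m(\widehat D)}$, and here I would invoke the hypotheses $m(C)=m(\widehat C)$ and $m(D)=m(\widehat D)$ to replace the denominator $m(\widehat C)m(\widehat D)$ by $m(C)m(D)$. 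This yields $d(\widehat C,\widehat D)=d(C,D)-1$, which is the assertion.

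The main obstacle I anticipate is justifying the two ingredients of Noether's formula in this generality: first, that the intersection multiplicity of a branch with an arbitrary curve equals the $t$-order of the defining series evaluated on a parametrization, which holds over any algebraically closed field but must be cited or proved; and second, that the common tangent hypothesis genuinely forces both strict transforms into the same chart at the same point, so that $(\widehat C,\widehat D)_{0}$ computed in the chart is the correct intersection number of the transforms. Note that the equalities $m(C)=m(\widehat C)$ and $m(D)=m(\widehat D)$ play no role in the Noether identity itself; they enter only in the final normalization of the denominator, and without them the clean statement $d(C,D)=d(\widehat C,\widehat D)+1$ would fail.
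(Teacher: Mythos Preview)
Your proposal is correct and follows the same route as the paper: both derive the lemma from Max Noether's formula $(C,D)_{0}=m(C)m(D)+(\widehat C,\widehat D)_{0}$ and then divide by $m(C)m(D)$, using the hypotheses $m(C)=m(\widehat C)$ and $m(D)=m(\widehat D)$ only to equate the denominators. The only difference is one of detail: the paper simply invokes Noether's theorem as a black box, whereas you sketch a proof of it via a parametrization of $C$ and the substitution $(x_1,y_1)=(x,y/x)$.
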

\begin{proof} It is a consequence of Max Noether's theorem, which states $(C,D)_{0}=m(C)m(D)+(\widehat C, \widehat D)_{0}.$
\end{proof}

\begin{teorema}[Hironaka]
\label{th:Hironaka}
Let $\widehat C$  be the strict quadratic transformation of a reduced singular unitangent curve $C$. We get
\begin{enumerate}
\item[\hbox{\rm (i)}] if $d(C) < 2$ then $m(\widehat C)<m(C)$,
\item[\hbox{\rm (ii)}] if $d(C)\geq 2$ then $m(\widehat C)=m(C)$ and $d(\widehat C)=d(C)-1$,
\item[\hbox{\rm (iii)}] if $d(C)\geq 2$ and $W$ is a smooth curve tangent to $C$  then $d(C,W)=d(\widehat {C}, \widehat{W})+1$.  If $W$ has maximal contact with $C$ then $\widehat{W}$ has maximal contact with  $\widehat{C}$.
\end{enumerate}
\end{teorema}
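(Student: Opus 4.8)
The plan is to prove the statement first when $C$ is a branch, where Lemma~\ref{prop:Angermuller} does the real work, and then to pass to an arbitrary reduced unitangent curve using the additivity of multiplicity together with Proposition~\ref{teorema} and Lemma~\ref{Noether}.

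For a branch, write $\Gamma(C)=\langle v_0,\dots,v_g\rangle$, so that $m(C)=v_0\geq 2$ and, by Lemma~\ref{maximal}, $d(C)=v_1/v_0$. I would first record the elementary but crucial observation that $d(C)\neq 2$ for a branch: since $v_1$ is a minimal generator it is not a multiple of $v_0$, whence $v_1/v_0\notin\bZ$. Thus $d(C)<2$ is equivalent to $v_1-v_0<v_0$ and $d(C)\geq 2$ to $v_0<v_1-v_0$, and these are precisely the two alternatives of Lemma~\ref{prop:Angermuller}. In the first alternative that lemma gives $m(\widehat C)=v_1-v_0<v_0=m(C)$, which is (i). In the second it gives $\Gamma(\widehat C)=\langle v_0,v_1-v_0,\dots\rangle$, so $m(\widehat C)=v_0=m(C)$ and, applying Lemma~\ref{maximal} to the (still singular) branch $\widehat C$, $d(\widehat C)=(v_1-v_0)/v_0=d(C)-1$; this is (ii). For (iii), once $m(\widehat C)=m(C)$ is known and $W$ is a smooth branch tangent to $C$ (so $m(\widehat W)=m(W)=1$ and $C,W$ share a tangent), Lemma~\ref{Noether} yields $d(C,W)=d(\widehat C,\widehat W)+1$; combining this with $d(\widehat C)=d(C)-1$ gives the maximal-contact assertion, since $d(\widehat C,\widehat W)=d(C,W)-1=d(C)-1=d(\widehat C)$.

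For a reduced unitangent curve $C=\bigcup_{i=1}^r C_i$ with $r>1$ I would use $m(C)=\sum_i m(C_i)$ and $m(\widehat C)=\sum_i m(\widehat C_i)$ (recall $\widehat C=\bigcup_i\widehat C_i$), together with $m(\widehat C_i)\leq m(C_i)$. Parts (ii) and (iii) are then comparatively routine. If $d(C)\geq 2$, then by Proposition~\ref{teorema} every $d(C_i)\geq 2$ and every $d(C_i,C_j)\geq 2$; by the branch case each $C_i$ preserves its multiplicity (smooth components trivially), so $m(\widehat C)=m(C)$, and moreover $d(\widehat C_i)=d(C_i)-1$ while Lemma~\ref{Noether} gives $d(\widehat C_i,\widehat C_j)=d(C_i,C_j)-1$ for $i\neq j$ (these remain distinct branches since their logarithmic distance stays finite). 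Feeding this into Proposition~\ref{teorema} for $\widehat C$, and using that subtracting $1$ commutes with $\inf$, yields $d(\widehat C)=d(C)-1$. Applying Lemma~\ref{Noether} to each pair $(C_i,W)$ in the same way gives $d(C,W)=\inf_i d(C_i,W)=\inf_i\bigl(d(\widehat C_i,\widehat W)+1\bigr)=d(\widehat C,\widehat W)+1$, and the maximal-contact statement then follows exactly as above.

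The genuinely delicate point is part (i) for $r>1$. A priori $d(C)<2$ could be caused solely by two branches being mutually close, i.e.\ by some $d(C_i,C_j)<2$ while every $d(C_i)>2$; if every branch preserved its multiplicity we would get $m(\widehat C)=m(C)$, contradicting the claim. The key step is to rule this out, which I would do by the contrapositive: if every $d(C_i)\geq 2$, then every $C_i$ preserves its multiplicity, so for each pair Lemma~\ref{Noether} gives $d(C_i,C_j)=d(\widehat C_i,\widehat C_j)+1\geq 2$, where the bound $d(\widehat C_i,\widehat C_j)\geq 1$ comes from $(A,B)_0\geq m(A)m(B)$. By Proposition~\ref{teorema} this forces $d(C)\geq 2$. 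Hence $d(C)<2$ must originate from a genuine branch with $d(C_{i_0})<2$, and the branch case gives $m(\widehat C_{i_0})<m(C_{i_0})$, so $m(\widehat C)<m(C)$. This combination of Noether's relation with the universal lower bound $d\geq 1$ is, I expect, the crux of the whole argument.
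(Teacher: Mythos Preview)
Your proposal is correct and follows essentially the same route as the paper: first settle the branch case via Lemma~\ref{prop:Angermuller} and Lemma~\ref{maximal}, then lift to a reducible unitangent curve using Proposition~\ref{teorema} and Lemma~\ref{Noether}. Your contrapositive treatment of part~(i) in the reducible case (if every $d(C_i)\geq 2$ then Noether plus $d\geq 1$ force every $d(C_i,C_j)\geq 2$, hence $d(C)\geq 2$) is in fact a slightly cleaner packaging of the paper's argument, which instead splits into cases according to whether the infimum in Proposition~\ref{teorema} is realised by some $d(C_{i_0})$ or by some $d(C_{i_0},C_{j_0})$ and then, in the latter case, derives $d(C_{i_0})<2$ or $d(C_{j_0})<2$ by contradiction from the same Noether-plus-lower-bound ingredient.
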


\begin{proof}

Firstly consider the case when $C$ is a singular branch. Let $\Gamma(C)=\langle v_{0},v_{1}, \ldots, v_{g}\rangle$. Let us prove (i). By Lemma \ref{maximal} $d(C)=\frac{v_{1}}{v_{0}}$ so $d(C)<2$ if and only if $v_{1}-v_{0}<v_{0}$.  By de second part of Lemma \ref{prop:Angermuller} we have $m(\widehat C)=\min (\Gamma(\widehat C)\backslash \{0\})=v_{1}-v_{0}<v_{0}=\min (\Gamma(C)\backslash \{0\})=m(C).$\\
Now we will prove (ii) when $C$ is irreducible. Assume that $d(C)\geq 2$ (in fact $d(C)>2$ since $d(C)\not\in \N)$. The condition $d(C)\geq 2$ means $v_{0}<v_{1}-v_{0}$ and by the first part of Lemma \ref{prop:Angermuller} we get $\Gamma(\widehat C)=\langle v_{0},v_{1}-v_{0},\cdots \rangle.$ Consequently $m(\widehat C)=v_{0}=m(C)$ and $d(\widehat C)=\frac{v_{1}-v_{0}}{v_{0}}=d(C)-1$. \\

Now let $C=\bigcup_{i=1}^{r}C_{i}$, $r>1$ with irreducible $C_{i}$ and let us prove $(i)$ and $(ii)$ in this case.\\
Assume that $d(C)<2$. We claim that there exists $i_{0}\in \{1,\ldots,r\}$ such that $d(C)=d(C_{i_{0}}).$ Suppose that such $i_{0}$ does not exist. Then $d(C)\neq d(C_{i})$ for any $i\in \{1,\ldots,r\}$ and by Proposition \ref{teorema} $d(C)=\inf_{i,j}\{d(C_{i},C_{j})\}=d(C_{i_{0}},C_{j_{0}})$ for some $i_{0},j_{0}\in \{1,\ldots,r\}$. We claim that $d(C_{i_{0}})<2$ or $d(C_{j_{0}})<2$. In the contrary case, we had $d(C_{i_{0}})\geq 2$ and $d(C_{j_{0}})\geq 2$ and we would get $m(\widehat C_{i_{0}})=m(C_{i_{0}})$ and $m(\widehat C_{j_{0}})=m(C_{j_{0}}),$ which implies by Lemma \ref{Noether} $d(C_{i_{0}},C_{j_{0}})=d(\widehat C_{i_{0}},\widehat C_{j_{0}})+1\geq 2$. This is a contradiction since $d(C_{i_{0}},C_{j_{0}})=d(C)<2$.\\
If $d(C_{i_{0}})=d(C)<2$ then by the irreducibility case, $m(\widehat C_{i_{0}})<
m(C_{i_{0}})$ and $m(C)-m(\widehat C)=\sum_{i=1}^{r}(m(C_{i})-m(\widehat C_{i}))\geq m(C_{i_{0}})-m(\widehat C_{i_{0}})>0$.\\

Suppose now that $d(C)\geq 2$. We have 
\[\inf\{d(C_{i})\}\geq \inf\{\inf (d(C_{i})),\inf (d(C_{i},C_{j}))\}=d(C)\geq 2.\]
 
Thus $d(C_{i})\geq 2$ for $i\in \{1,\ldots,r\}$ and by the first part of the proof $m(\widehat C_{i})=m(C_{i})$ and $d(\widehat C_{i})=d(C_{i})-1.$ 
Hence $m(\widehat C)=m(C)$.
Moreover, by Lemma \ref{Noether}, $d(C_{i},C_{j})=d(\widehat C_{i},\widehat C_{j})+1$ 
and $d(C)= \inf\{\inf (d(C_{i})),\inf (d(C_{i},C_{j}))\}=\inf\{\inf (d(\widehat C_{i})), \inf (d(\widehat C_{i},\widehat C_{j}))\}+1=d(\widehat C)+1.$\\

To finish let us prove (iii). By Lemma \ref{Noether} $d(C_{i},W)= d(\widehat C_{i},\widehat W)+1$ for $i\in \{1,\ldots,r\}$ and $d(C,W)=\inf \{d(C_{i},W)\}=\inf \{d(\widehat C_{i},W)\}+1=d(\widehat C,W)+1$. Suppose that $W$ has maximal contact with $C$.
Then $d(C)=d(C,W)=d(\widehat C,\widehat W)+1\leq d(\widehat C)+1=d(C),$ where the last equality is a consequence of statement (ii) of the theorem. This implies $d(\widehat C,\widehat W)=d(\widehat C)$. Thus $\widehat W$ has maximal contact with $\widehat C$.
\end{proof}

\medskip 

\begin{lema}
\label{444}
Let $C$ be a reduced curve  with $r> 1$ branches and $W$ a smooth branch.
If $d(C,W)\not\in \mathbb N$ then $d(C,W)=d(C)$. 
\end{lema}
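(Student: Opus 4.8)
The plan is to locate the component of $C$ that realizes $d(C,W)$ and to show that $W$ in fact has maximal contact with it. Write $C=\bigcup_{i=1}^{r}C_{i}$. By definition $d(C,W)=\inf_{i}\{d(C_{i},W)\}$, and since there are finitely many components this infimum is attained, say $d(C,W)=d(C_{i_{0}},W)$. First I would record that $d(C,W)$ is finite: as $r>1$ and $W$ is a single branch, at least one component $C_{j}$ differs from $W$, so $d(C_{j},W)=\frac{(C_{j},W)_{0}}{m(C_{j})}<\infty$ and hence $d(C,W)\le d(C_{j},W)<\infty$. In particular the realizing component satisfies $C_{i_{0}}\neq W$.

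Next I would argue that $C_{i_{0}}$ must be singular. If $C_{i_{0}}$ were smooth then, since $C_{i_{0}}\neq W$, we would have $d(C_{i_{0}},W)=(C_{i_{0}},W)_{0}\in\mathbb{N}$, forcing $d(C,W)\in\mathbb{N}$ and contradicting the hypothesis. Hence $C_{i_{0}}$ is a singular branch, and by Lemma~\ref{maximal} there is a smooth branch $W_{0}$ with $d(C_{i_{0}},W_{0})=d(C_{i_{0}})=\frac{v_{1}}{v_{0}}\notin\mathbb{N}$, where $\Gamma(C_{i_{0}})=\langle v_{0},v_{1},\dots\rangle$.

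The heart of the argument is to show $d(C_{i_{0}},W)=d(C_{i_{0}})$, i.e.\ that $W$ too realizes maximal contact with $C_{i_{0}}$; this is where the ultrametric nature of $d$ enters. I would apply property $(\delta_{3})$ to the three branches $C_{i_{0}}$, $W_{0}$ and $W$. Since $d(C_{i_{0}},W_{0})=\frac{v_{1}}{v_{0}}\notin\mathbb{N}$ while $d(W_{0},W)=(W_{0},W)_{0}\in\mathbb{N}$ (both smooth, distinct), these two values differ, so the remark following $(\delta_{3})$ gives $d(C_{i_{0}},W)=\inf\{d(C_{i_{0}},W_{0}),\,d(W_{0},W)\}$. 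The minimum of a non-integer and an integer is itself a non-integer only when it equals the non-integer term; since $d(C_{i_{0}},W)=d(C,W)\notin\mathbb{N}$, we must have $d(C_{i_{0}},W)=\frac{v_{1}}{v_{0}}=d(C_{i_{0}})$. (If it happens that $W=W_{0}$, this identity is immediate.)

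Finally I would combine this with Proposition~\ref{teorema}. On the one hand $d(C,W)\le d(C)$ by the definition of the contact exponent as a supremum over smooth branches. On the other hand Proposition~\ref{teorema} gives $d(C)=\inf\{\inf_{i}d(C_{i}),\inf_{i,j}d(C_{i},C_{j})\}\le\inf_{i}d(C_{i})\le d(C_{i_{0}})=d(C_{i_{0}},W)=d(C,W)$. These two inequalities force $d(C,W)=d(C)$, as desired. The delicate point — the main obstacle — is the middle step: establishing that the infimum defining $d(C,W)$ is attained at a \emph{singular} branch and then using the isosceles (ultrametric) behaviour of $d$ to conclude that $W$ must have maximal contact with it. Once that is in hand, the rest is bookkeeping with Proposition~\ref{teorema} and the supremum defining $d(C)$.
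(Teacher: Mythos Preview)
Your argument is correct and follows essentially the same route as the paper's proof: pick $i_{0}$ realizing the infimum, reduce to the single-branch fact that $d(C_{i_{0}},W)\notin\mathbb{N}$ forces $d(C_{i_{0}},W)=d(C_{i_{0}})$, and then sandwich $d(C)$ between $d(C,W)$ and $d(C_{i_{0}})$ via Proposition~\ref{teorema}. The only difference is cosmetic: the paper declares the branch case ``obvious'' (it is implicit in the proof of Lemma~\ref{maximal}), whereas you spell it out explicitly via the ultrametric triangle with $W_{0}$---which is exactly the argument hidden behind that word.
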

\begin{proof}
The lemma is obvious if $C$ is a branch. In the general case $d(C,W)=\hbox{\rm inf}_i\{d(C_i,W)\}=d(C_{i_0},W)$ for
some $i_0\in \{1,\ldots,r\}$. If $d(C,W)\not\in \mathbb N$ then $d(C_{i_0},W)\not\in \mathbb N$ and $d(C_{i_0},W)=d(C_{i_0})$ since $C_{i_0}$ is a branch. Consequently, we get $d(C,W)=d(C_{i_0})$ which implies, by Proposition \ref{teorema}, $d(C)=d(C_{i_0})=d(C,W)$.
\end{proof}

\medskip

Now we give a characterization of smooth curves which does not have maximal contact with a reduced curve.

\medskip
\begin{prop}
Let $C$ be a reduced curve  with $r> 1$ branches.  A 
smooth branch $W$ does not have maximal contact with $C$ 
if and only if $(C,W)_0< d(C)m(C)$. Moreover, in this case 
$(C,W)_0\equiv 0$ \hbox{\rm (mod} $m(C)$\hbox{\rm )}.
\end{prop}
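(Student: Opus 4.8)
The plan is to prove both directions of the equivalence and then extract the divisibility statement. First I would set up notation: let $W$ be a smooth branch and recall from the definition that $W$ has maximal contact with $C$ precisely when $d(C,W)=d(C)$. Since by Lemma \ref{elemental d} (part 2) we always have $d(C,W)\le \frac{(C,W)_0}{m(C)m(W)}=\frac{(C,W)_0}{m(C)}$ (because $m(W)=1$), the natural strategy is to relate the failure of maximal contact to the strict inequality $d(C,W)<\frac{(C,W)_0}{m(C)}$ and to the integrality of $d(C,W)$.

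For the forward direction, suppose $W$ does not have maximal contact, so $d(C,W)<d(C)$. By Lemma \ref{444}, the contrapositive tells us that $d(C,W)\in\mathbb N$ (if it were not an integer, it would equal $d(C)$). Now the key point is that once $d(C,W)$ is an integer strictly below $d(C)$, I expect to invoke part 2 of Lemma \ref{elemental d}: the equality $d(C,W)=\frac{(C,W)_0}{m(C)}$ holds when $d(C,W)<\inf_{i,j}\{d(C_i,C_j)\}$, and I must argue that this inf is bounded below appropriately. Here I would use Proposition \ref{teorema}, which gives $d(C)=\inf\{\inf_i d(C_i),\inf_{i,j}d(C_i,C_j)\}$; combined with $d(C,W)<d(C)$ this forces $d(C,W)$ to be strictly less than $\inf_{i,j}d(C_i,C_j)$, so Lemma \ref{elemental d} yields $(C,W)_0=d(C,W)\,m(C)<d(C)m(C)$. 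The divisibility $(C,W)_0\equiv 0 \pmod{m(C)}$ then drops out immediately, since $(C,W)_0=d(C,W)m(C)$ with $d(C,W)\in\mathbb N$.

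For the converse, suppose $(C,W)_0<d(C)m(C)$. Since $d(C,W)\le\frac{(C,W)_0}{m(C)}<d(C)$, we get $d(C,W)<d(C)$ directly, which is exactly the statement that $W$ does not have maximal contact. This direction is essentially immediate from the inequality in Lemma \ref{elemental d}, so the real content lies in the forward implication and in establishing the congruence.

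The main obstacle I anticipate is making the application of Lemma \ref{elemental d} part 2 airtight: I need the strict inequality $d(C,W)<\inf_{i,j}\{d(C_i,C_j)\}$ rather than merely $d(C,W)<d(C)$. This requires a small but careful argument splitting on whether $d(C)=\inf_i d(C_i)$ or $d(C)=\inf_{i,j}d(C_i,C_j)$ in Proposition \ref{teorema}; in either case I must rule out the possibility that $W$'s contact coincides with an inter-branch contact $d(C_i,C_j)$. The cleanest way is to observe that $\inf_{i,j}d(C_i,C_j)\ge d(C)>d(C,W)$, which follows because $d(C)$ is the infimum of a set containing $\inf_{i,j}d(C_i,C_j)$, giving the needed strict inequality and closing the proof.
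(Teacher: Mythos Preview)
Your proposal is correct and follows essentially the same route as the paper's proof: both directions use Lemma~\ref{elemental d} part~2, the strict inequality $d(C,W)<\inf_{i,j}\{d(C_i,C_j)\}$ is obtained exactly as you indicate (from Proposition~\ref{teorema}, since $d(C)\le\inf_{i,j}\{d(C_i,C_j)\}$), and the integrality of $d(C,W)$ comes from Lemma~\ref{444}. The only cosmetic difference is ordering: the paper first derives $(C,W)_0=d(C,W)m(C)<d(C)m(C)$ and then argues $d(C,W)\in\mathbb N$, whereas you establish the integrality first; also, your worry about needing a case split is unfounded, as your final observation $\inf_{i,j}d(C_i,C_j)\ge d(C)>d(C,W)$ handles it in one line, just as the paper does.
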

\begin{proof}
Let us suppose that $W$ is a smooth branch which does not have 
maximal contact with $C$. We will check that $(C,W)_0<d(C)m(C)$ and
$\frac{(C,W)_0}{m(C)}\in \mathbb N$. By Proposition \ref{teorema} we get
$d(C,W)<\hbox{\rm inf }_{i,j}\{d(C_i,C_j)\}$ since $d(C,W)<d(C)$.
According to the second part of Lemma \ref{elemental d} we can write
$d(C,W)=\frac{(C,W)_0}{m(C)}$, thus $(C,W)_0=d(C,W)m(C)<d(C)m(C)$.
We claim
$\frac{(C,W)_0}{m(C)}=d(C,W)$ is an integer. Indeed, by  Lemma \ref{444} we get
$d(C,W)=d(C)$, which is a contradiction.

\noindent Now suppose that $(C,W)_0<d(C)m(C)$. By the second part of
Lemma \ref{elemental d} we get $d(C,W)\leq \frac{(C,W)_0}{m(C)}$ and consequently $d(C,W)<d(C)$, which means that $W$ does not have maximal contact with $C$.
\end{proof}

\section{Milnor number and Hironaka contact exponent}
\label{sect4}
Let $C$ be a reduced curve. We define the {\em Milnor number} $\mu(C)$ of $C$ by the formula $\mu(C)=c(C)-r(C)+1$, where $c(C)$ is the degree of the conductor of the local ring of $C$ and $r(C)$ is the number of branches (see Preliminaries).\\

If $C:\{f=0\}$ then $\mu(C)=\dim_{K}K[[x,y]]/\left(\frac{\partial f}{\partial x},\frac{\partial f}{\partial y}\right)$ provided that $K$ is of characteristic zero (see \cite{GB-P2}).

\begin{lema}
\label{Milnor}
Let $C=\bigcup_{i=1}^{r}C_{i}$, where $r\geq 1$ and $C_{i}$ are irreducible. Then
\begin{enumerate}
\item $\mu(C)+r-1=\sum_{i=1}^{r}\mu(C_{i})+2\sum_{1\leq i<j \leq r}(C_{i},C_{j})_{0}$,
\item if $C$ is a branch then $\mu(C)$ equals the conductor of the semigroup $\Gamma(C)$,
\item $\mu(C)\geq 0$ with equality if and only if $C$ is a smooth branch.
\end{enumerate}
\end{lema}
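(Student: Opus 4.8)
The plan is to establish part 2 first, use it to recast part 1 as an additivity formula for the degree of the conductor, and then deduce part 3 from the first two parts. Part 2 is immediate: if $C$ is a branch then $r(C)=1$, so $\mu(C)=c(C)-r(C)+1=c(C)$, and by the Preliminaries this $c(C)$ is, for a branch, precisely the conductor of the semigroup $\Gamma(C)$ (the least element of $\Gamma(C)$ such that $c(C)+N\in\Gamma(C)$ for all $N\in\N$).

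For part 1 I would first simplify both sides. From $\mu(C)=c(C)-r+1$ we get $\mu(C)+r-1=c(C)$, and part 2 applied to each branch gives $\mu(C_i)=c(C_i)$. Hence the assertion is equivalent to the conductor additivity formula
\[
c(C)=\sum_{i=1}^{r}c(C_i)+2\sum_{1\leq i<j\leq r}(C_i,C_j)_0.
\]
To prove this I would pass through the colength $\vartheta(C):=\dim_K\overline{{\cal O}}_C/{\cal O}_C$. Because a plane curve singularity is Gorenstein, the inclusions ${\cal C}\subseteq{\cal O}_C\subseteq\overline{{\cal O}}_C$ together with the duality symmetry $\dim_K{\cal O}_C/{\cal C}=\dim_K\overline{{\cal O}}_C/{\cal O}_C$ yield $c(C)=2\vartheta(C)$, and likewise $c(C_i)=2\vartheta(C_i)$. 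It therefore suffices to prove $\vartheta(C)=\sum_i\vartheta(C_i)+\sum_{i<j}(C_i,C_j)_0$, which I would do by induction on $r$. The inductive step rests on the two-factor identity $\vartheta(A\cup B)=\vartheta(A)+\vartheta(B)+(A,B)_0$ for reduced curves $A:\{f_A=0\}$ and $B:\{f_B=0\}$ without common branch; this follows from the short exact sequence
\[
0\longrightarrow{\cal O}_{A\cup B}\longrightarrow{\cal O}_A\times{\cal O}_B\longrightarrow K[[x,y]]/(f_A,f_B)\longrightarrow 0,
\]
whose cokernel has dimension $(A,B)_0$, combined with the decomposition $\overline{{\cal O}}_{A\cup B}=\overline{{\cal O}}_A\times\overline{{\cal O}}_B$ of the integral closure. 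Applying this with $A=C_1\cup\cdots\cup C_{r-1}$ and $B=C_r$, and using $(A,C_r)_0=\sum_{i<r}(C_i,C_r)_0$, completes the induction; multiplying by $2$ gives the displayed conductor formula, hence part 1.

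For part 3 I would separate the two cases. If $C$ is a branch, part 2 gives $\mu(C)=c(C)\geq 0$, and $\mu(C)=0$ forces $\Gamma(C)=\N$, which by the Preliminaries holds exactly when $C$ is smooth. If $r>1$, I would insert the bounds $(C_i,C_j)_0\geq m(C_i)m(C_j)\geq 1$ into part 1 to obtain
\[
\mu(C)=\sum_{i=1}^{r}\mu(C_i)+2\sum_{i<j}(C_i,C_j)_0-(r-1)\geq r(r-1)-(r-1)=(r-1)^2\geq 1,
\]
so $\mu(C)>0$ and the value $0$ is never attained by a non-branch. Thus $\mu(C)\geq 0$ with equality precisely for smooth branches.

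The main obstacle is the additivity formula behind part 1: the genuinely non-formal inputs are the Gorenstein symmetry $c=2\vartheta$ and the two-factor colength identity (alternatively one may cite the additivity of the conductor from the references on $\mu$). Once these are in hand, parts 2 and 3 reduce to short bookkeeping with the semigroup and the intersection-number inequalities.
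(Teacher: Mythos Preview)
Your proof is correct. The paper itself does not prove this lemma at all: its entire proof is the one-line citation ``See \cite[Proposition 2.1]{GB-P2}.'' So there is no in-paper argument to compare against; you have supplied a genuine self-contained proof where the authors simply defer to their earlier work.

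Your route---reducing part~1 to the conductor additivity $c(C)=\sum_i c(C_i)+2\sum_{i<j}(C_i,C_j)_0$, then proving that via the colength $\vartheta$ and the Gorenstein identity $c=2\vartheta$ together with the Chinese-remainder exact sequence---is exactly the standard one and is in fact the argument underlying the cited reference. The deductions of parts~2 and~3 from the definitions and from part~1 are straightforward and your bound $\mu(C)\geq (r-1)^2$ for $r>1$ is clean. The only external ingredients you invoke (Gorensteinness of plane curve singularities giving $c=2\vartheta$, and the exactness of $0\to{\cal O}_{A\cup B}\to{\cal O}_A\times{\cal O}_B\to K[[x,y]]/(f_A,f_B)\to 0$) are classical and would be accepted without further justification in this context.
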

\begin{proof}
See \cite[Proposition 2.1]{GB-P2}.
\end{proof}

\medskip

\begin{prop}
\label{Milnor}
Let $C=\bigcup_{i=1}^rC_i$ be a singular reduced curve with 
$r$ branches $C_i$. Then $\mu(C)\geq (d(C)m(C)-1)(m(C)-1)$ with equality if and only if the following two conditions are satisfied:

\begin{enumerate}
\item [$(e_1)$] $d(C_i,C_j)=d(C)$ for all $i\neq j$,

\item [$(e_2)$] if the branch $C_i$ is singular then $C_i$ has 
exactly one Zariski pair and $d(C_i)=d(C)$.
\end{enumerate}
\end{prop}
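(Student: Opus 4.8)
The plan is to reduce to a sharp inequality for a single branch and then assemble the general case through the additivity of the Milnor number. Write $m=m(C)$, $d=d(C)$, and for each branch $m_i=m(C_i)$, $d_i=d(C_i)$, $d_{ij}=d(C_i,C_j)$; thus $m=\sum_i m_i$ and $(C_i,C_j)_0=d_{ij}m_im_j$. By Proposition \ref{teorema} (trivially if $r=1$) one has $d\le d_i$ and $d\le d_{ij}$ for all $i,j$.

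The crucial step is the following single-branch bound: if $C$ is a singular branch with $\Gamma(C)=\langle v_0,\ldots,v_g\rangle$, then
\[c(C)\ge (v_0-1)(v_1-1)=(m(C)-1)(d(C)m(C)-1),\]
with equality if and only if $g=1$. Starting from the conductor formula $c(C)=\sum_{k=1}^{g}(n_k-1)v_k-v_0+1$, this is equivalent to $\sum_{k=2}^{g}(n_k-1)v_k\ge (v_0-n_1)v_1$. I would iterate the strongly increasing property $v_k>n_{k-1}v_{k-1}$ to obtain $v_k>(n_1\cdots n_{k-1})v_1$ for $k\ge 2$, and then use the telescoping identity $\sum_{k=2}^{g}(n_k-1)(n_1\cdots n_{k-1})=n_1\cdots n_g-n_1=v_0-n_1$. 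For $g\ge 2$ this yields a strict inequality, while for $g=1$ the sum is empty and $n_1=v_0$ (because $e_1=1$), giving exact equality.

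To assemble the general case, apply the additivity formula (Lemma \ref{Milnor}(1)):
\[\mu(C)=\sum_{i=1}^{r}\mu(C_i)+2\sum_{1\le i<j\le r}(C_i,C_j)_0-r+1.\]
For each singular branch the first step gives $\mu(C_i)=c(C_i)\ge(m_i-1)(d_im_i-1)\ge(m_i-1)(dm_i-1)$, the last inequality using $d_i\ge d$; for each smooth branch $\mu(C_i)=0=(m_i-1)(dm_i-1)$ since $m_i=1$. Also $(C_i,C_j)_0=d_{ij}m_im_j\ge d\,m_im_j$. Substituting these bounds and using $2\sum_{i<j}m_im_j=m^2-\sum_i m_i^2$ together with $\sum_i m_i=m$, the terms containing $\sum_i m_i^2$ cancel and the right-hand side collapses to $(dm-1)(m-1)$, which is the asserted bound.

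For the equality statement, equality in the chain forces equality in each estimate. Equality $d_{ij}=d$ for all $i\ne j$ is precisely condition $(e_1)$. For each singular branch, equality in $\mu(C_i)\ge(m_i-1)(d_im_i-1)$ forces $g_i=1$, i.e.\ exactly one Zariski pair, and equality in $(m_i-1)(d_im_i-1)=(m_i-1)(dm_i-1)$ forces $d_i=d$ since $m_i>1$; together these are condition $(e_2)$, while smooth branches impose nothing. The main obstacle is the single-branch inequality for $g\ge 2$: when $\gcd(v_0,v_1)>1$ the leading term $(n_1-1)v_1$ is by itself too small, so one must genuinely exploit the higher generators, and it is exactly the iterated strongly increasing estimate together with the telescoping above that makes them compensate and isolates $g=1$ as the only equality case.
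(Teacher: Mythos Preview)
Your proof is correct and follows essentially the same approach as the paper: the single-branch bound via the conductor formula, the telescoping identity $\sum_k (n_k-1)n_{k-1}\cdots n_1 = v_0 - n_1$ (the paper includes $k=1$ with $n_0=1$ to telescope to $v_0-1$, but this is cosmetic), and then the additivity formula together with $d(C_i)\ge d(C)$, $d(C_i,C_j)\ge d(C)$ from Proposition~\ref{teorema}. The equality analysis is also the same.
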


\begin{proof}
First let us suppose that $C$ is a branch with 
$\Gamma(f)=\langle v_0,v_1,\ldots,v_g\rangle$. Let $n_{0}=1$. Since
$n_{i-1}v_{i-1}\leq v_{i}$ for $i\in \{1,\ldots,g\}$ we have $n_{0}n_{1}\cdots n_{k-1}v_{1}\leq v_{k}$ for $k\in \{1,\ldots,g\}$. We get
\begin{eqnarray*}
c(C)&=&\sum_{k=1}^{g}(n_{k}-1)v_{k}-v_{0}+1\geq \sum_{k=1}^{g}((n_{k}-1)n_{k-1}\cdots n_{1}n_{0})v_{1}-v_{0}+1\\
&=&(n_{g}\cdots n_{1}n_{0}-n_{0})v_{1}-(v_{0}-1)=(v_{0}-1)v_{1}-(v_{0}-1)\\
&=&(v_{0}-1)(v_{1}-1).
\end{eqnarray*}
Moreover, $c(C)=(v_{0}-1)(v_{1}-1)$ if and only if $\Gamma(C)=\langle v_{0},v_{1}\rangle$.\\

Now suppose that the curve $C$ has $r>1$ branches $C_i$ and let $\overline  m_i=m(C_i)$ for $i\in\{1,\ldots,r\}$. From Proposition \ref{teorema} 
we get $d(C_i)\geq d(C)$ and $d(C_i,C_j)\geq d(C)$ for all $i,j\in \{1,
\ldots,r\}$. By the first part of the proof $\mu(C_i)\geq 
(d(C_i)\overline m_i-1)(\overline m_i-1)$ for the singular branches with equality if and only if $C_i$ is a singular branch satisfying condition $(e_2)$.

Let $I:=\{i\;:\;C_i \;\hbox{\rm is singular}\}$.
\medskip
Now we get 
\begin{eqnarray*}
\mu(C)+r-1 & = & \sum_{i=1}^r \mu(C_i) +2\sum_{1\leq i<j\leq r}(C_i,C_j)_0 \\
 & = & \sum_{i=1}^r \mu(C_i) +2\sum_{1\leq i<j\leq r}d(C_i,C_j)\overline  m_i\overline  m_j \\
 & \geq & \sum_{i\in I} (d(C_i)\overline  m_i-1)(\overline  m_i-1)+2\sum_{1\leq i < j \leq r} 
d(C_i,C_j)\overline  m_i\overline  m_j\\
 & \geq & \sum_{i=1}^r (d(C)\overline  m_i-1)(\overline  m_i-1)+2\sum_{1\leq i < j \leq r} 
d(C)\overline  m_i\overline  m_j\\
 & = & d(C)(m(C)^2-m(C))-m(C)+r
\end{eqnarray*}

with equality if and only if the conditions $(e_1)$ and $(e_2)$ 
are satisfied.
\end{proof}

\begin{lema}
\label{una sola tangente}
Let $C$ be a unitangent singular curve. We have:
\begin{enumerate}

\item $d(C)\geq 1+\frac{1}{m(C)}$. Moreover $d(C)=1+\frac{1}{m(C)}$ if and only if $C$ is a branch of semigroup $\langle m(C),m(C)+1\rangle$.

\item $\mu(C)\geq m(C)(m(C)-1)$ with equality if and only if $d(C)=1+\frac{1}{m(C)}$.
\end{enumerate}
\end{lema}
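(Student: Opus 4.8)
The plan is to derive both parts from a single observation: the common tangent of the unitangent curve $C$ is itself a smooth branch, and therefore it competes in the supremum defining $d(C)$. Write $m=m(C)$ and let $L$ be the smooth branch defined by the common tangent line of $C$.

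First I would prove the inequality in (1). Every branch $C_i$ of $C$ is tangent to $L$, so the tangent cones of $C_i$ and $L$ coincide; hence $(C_i,L)_0>m(C_i)m(L)=m(C_i)$, i.e. $(C_i,L)_0\geq m(C_i)+1$. Consequently $d(C_i,L)=\frac{(C_i,L)_0}{m(C_i)}\geq 1+\frac{1}{m(C_i)}\geq 1+\frac{1}{m}$, since $m(C_i)\leq m$. Taking the infimum over $i$ gives $d(C,L)\geq 1+\frac{1}{m}$, and because $L$ is smooth, $d(C)\geq d(C,L)\geq 1+\frac{1}{m}$.

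For the equality in (1), I would first rule out the case $r>1$: then each branch satisfies $m(C_i)\leq m-(r-1)<m$, so all the inequalities above are strict and $d(C)>1+\frac{1}{m}$. Thus $d(C)=1+\frac{1}{m}$ forces $r=1$, i.e. $C$ is a branch. For a branch, Lemma \ref{maximal} gives $d(C)=\frac{v_1}{v_0}$ with $v_0=m$, so $d(C)=1+\frac{1}{m}$ is equivalent to $v_1=m+1$; since $\gcd(m,m+1)=1$ we get $g=1$ and $\Gamma(C)=\langle m,m+1\rangle$. Conversely, this semigroup yields $d(C)=\frac{m+1}{m}$ by Lemma \ref{maximal}, completing (1).

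Part (2) then follows from the bound $\mu(C)\geq (d(C)m-1)(m-1)$ of Proposition \ref{Milnor}. Using (1), $d(C)m\geq m+1$, so $d(C)m-1\geq m$, and since $m\geq 2$ (as $C$ is singular) multiplying by $m-1>0$ gives $\mu(C)\geq m(m-1)$. If $\mu(C)=m(m-1)$, the chain $\mu(C)\geq (d(C)m-1)(m-1)\geq m(m-1)=\mu(C)$ collapses to equalities, and as $m-1>0$ the second equality forces $d(C)m-1=m$, that is $d(C)=1+\frac{1}{m}$. Conversely, if $d(C)=1+\frac{1}{m}$, then by (1) $C$ is the branch with $\Gamma(C)=\langle m,m+1\rangle$, whose conductor is $c(C)=(v_0-1)(v_1-1)=m(m-1)$ (computed as in the proof of Proposition \ref{Milnor}), so $\mu(C)=c(C)-r(C)+1=m(m-1)$. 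The only delicate point is the equality analysis of (1) when $r>1$: the strict inequality there rests on $m(C_i)<m(C)$, which is precisely what eliminates the reducible case; the remaining steps are direct computations.
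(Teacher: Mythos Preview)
Your proof is correct, and part~(2) follows the paper's argument essentially verbatim. For part~(1), however, you take a genuinely different and more elementary route. The paper first disposes of the case where $d(C)$ is an integer (then $d(C)\geq 2>1+\frac{1}{m(C)}$ since $C$ is unitangent), and in the remaining case invokes the structural results Proposition~\ref{teorema} and Corollary~\ref{rrrr}(3) to find a branch $C_{i_0}$ with $d(C)=d(C_{i_0})$, then bounds $d(C_{i_0})=v_1/v_0$ from below. You instead test directly against the common tangent line $L$: from $(C_i,L)_0\geq m(C_i)+1$ you get $d(C)\geq d(C,L)\geq 1+\frac{1}{m}$ in one stroke, and the equality case follows from the observation that $r>1$ forces $m(C_i)<m$ and hence a strict inequality. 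Your argument is shorter, avoids the integer/non-integer case split, and does not rely on the decomposition formula for $d(C)$; the paper's approach, by contrast, illustrates how that formula controls $d(C)$ through a single branch, which is thematically in line with the rest of Section~\ref{sect3}.
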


\begin{proof}
Let $\{C_i\}_i$ be the set of branches of $C$. To check the first part of the lemma we may assume that $d(C)$ is not an integer. Then by
Proposition \ref{teorema} and the third part of Corollary \ref{rrrr} 
there is an $i_0$ such that $d(C)=d(C_{i_0})$. The contact exponent 
$d(C_{i_0})$ is a fraction with the denominator less than or equal to 
$m(C_{i_0})$. Therefore we get $d(C)=d(C_{i_0})\geq 1+\frac{1}{m(C_{i_0})}
\geq 1+\frac{1}{m(C)}$ and the equality $d(C)=1+\frac{1}{m(C)}$ implies 
$m(C_{i_0})=m(C)$ and consequently $C_{i_0}=C$. Moreover the semigroup of $C$ is $\langle m(C),m(C)+1\rangle$ since $m(C)$ and $m(C)+1$ are coprime.

\medskip

\noindent In order to prove the second part we get, by Proposition \ref{Milnor} and the first part of this lemma,
\begin{eqnarray*}
\mu(C)& \geq &(d(C)m(C)-1)(m(C)-1)\\
 & \geq &\left(\left(1+\frac{1}{m(C)}\right)m(C)-1\right)(m(C)-1)=m(C)(m(C)-1).
\end{eqnarray*}

\noindent If $\mu(C)=m(C)(m(C)-1)$ then from the above calculation it follows that $d(C)=1+\frac{1}{m(C)}$.

\medskip 

\noindent On the other hand if $d(C)=1+\frac{1}{m(C)}$ then by the first part of this lemma $C$ is a branch of semigroup $\langle m(C),m(C)+1\rangle$. According to Proposition \ref{Milnor} $\mu(C)=(d(C)m(C)-1)(m(C)-1)=m(C)(m(C)-1)$.
\end{proof}

\medskip 

If $\mu(C)=(d(C)m(C)-1)(m(C)-1)$ then the pair $(m(C), d(C))$ determines the equisingularity class of $C$. More specifically, we have:

\begin{prop}
\label{Eggers2}
Let $C$ be a reduced  singular curve. Then $\mu(C)=(d(C)m(C)-1)(m(C)-1)$ if and only if one of the following three conditions holds

\begin{enumerate}
\item[{\rm (1)}] $d(C)\in \mathbb N$. All branches of $C$ are smooth and intersect pairwise with multiplicity $d(C)$.

\item[{\rm (2)}]  $d(C)\not\in \mathbb N$ and $m(C)d(C)\in \mathbb N$. The curve $C$ has  $r=\gcd(m(C),m(C)d(C))$ branches, each with semigroup generated by $\left(\frac{m(C)}{r},\frac{m(C)d(C)}{r}\right)$, intersecting pairwise with multiplicity $\frac{m(C)^2d(C)}{r^2}$.

\item[{\rm (3)}]  $m(C)d(C)\not\in \mathbb N$. There is a smooth curve $L$ such that $C=L\cup C'$, where $C'$ is a curve of type $(2)$ with $d(C')=d(C)$ and $m(C')=m(C)-1$. The branch $L$ has maximal contact with any branch of $C'$.
\end{enumerate}
\end{prop}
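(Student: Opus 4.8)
The plan is to reduce everything to the equality conditions $(e_1)$ and $(e_2)$ of Proposition~\ref{Milnor}, which already asserts that $\mu(C)=(d(C)m(C)-1)(m(C)-1)$ holds if and only if $d(C_i,C_j)=d(C)$ for all $i\neq j$ and every singular branch $C_i$ has exactly one Zariski pair with $d(C_i)=d(C)$. Writing $d=d(C)$, $m=m(C)$ and $C=\bigcup_{i=1}^{r}C_i$, it therefore suffices to prove that the conjunction $(e_1)\wedge(e_2)$ is equivalent to the disjunction of (1), (2), (3). Since ``$d\in\N$'', ``$d\notin\N$ with $md\in\N$'', and ``$md\notin\N$'' form an exhaustive and mutually exclusive trichotomy, I would organise both directions around it.

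For the forward direction I would first use $(e_2)$ to pin down all the singular branches at once. Write $d=p/q$ in lowest terms. If $d\notin\N$, Corollary~\ref{rrrr}(3) and Proposition~\ref{teorema} produce a branch $C_{i_0}$ with $d(C_{i_0})=d$, which must be singular since $d$ is finite while a smooth branch has infinite contact exponent; thus $q>1$ and at least one singular branch is present. For any singular $C_i$, condition $(e_2)$ gives a single Zariski pair, so $\Gamma(C_i)=\langle v_0,v_1\rangle$ with $\gcd(v_0,v_1)=1$, and Lemma~\ref{maximal} together with $d(C_i)=d$ forces $v_1/v_0=p/q$; as both fractions are reduced, $v_0=q$ and $v_1=p$. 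Hence \emph{every} singular branch has the same semigroup $\langle q,p\rangle$ and multiplicity $q$. The decisive step is then to bound the number of smooth branches: if $C_i,C_j$ were two distinct smooth branches, $(e_1)$ would give $(C_i,C_j)_0=d(C_i,C_j)=d=p/q\notin\N$, impossible for an intersection number. So at most one smooth branch occurs.

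With this structure the trichotomy falls out by counting multiplicities. Let $s$ be the number of singular branches and $t\in\{0,1\}$ the number of smooth ones, so $m=sq+t$ and $md=sp+tp/q$. If $d\in\N$ then Corollary~\ref{rrrr}(3) gives $d<\inf_i d(C_i)$, so $(e_2)$ leaves no singular branch; all branches are smooth and, by $(e_1)$, pairwise intersection $d$, which is case (1). If $d\notin\N$ then $q>1$, so $md\in\N$ forces $t=0$ and $md\notin\N$ forces $t=1$. When $t=0$ I would read off $r=\gcd(m,md)$ from $m=rq$, $md=rp$ and $\gcd(p,q)=1$, together with generators $(m/r,md/r)=(q,p)$ and pairwise intersection $dq^2=m^2d/r^2$, giving case (2) (a single singular branch being the sub-case $r=1$). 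When $t=1$ the smooth branch $L$ and the singular part $C'=\bigcup C_i'$ satisfy $m(C')=sq=m-1$ and, by Proposition~\ref{teorema}, $d(C')=d$, so $C'$ is of type (2), while $(e_1)$ gives $d(L,C_i')=d=d(C_i')$, i.e.\ $L$ has maximal contact with each branch of $C'$; this is case (3).

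For the converse I would simply verify $(e_1)$ and $(e_2)$ in each prescribed configuration and invoke Proposition~\ref{Milnor}. In every case Proposition~\ref{teorema} recovers $d(C)=d$ and additivity of order gives $m(C)=m$, after which $(e_1)$ (all pairwise $d$-values equal $d$) and $(e_2)$ (each singular branch has the two-generator semigroup $\langle m/r,md/r\rangle$ and contact exponent $d$) are immediate; for case (3) one also uses that maximal contact of $L$ with each $C_i'$ means $d(L,C_i')=d(C_i')=d$. The only genuine obstacle is the forward direction, specifically the two observations that $(e_2)$ rigidifies all singular branches into the single semigroup $\langle q,p\rangle$ and that integrality of intersection numbers permits at most one smooth branch; once these hold, the split into (1)/(2)/(3) is bookkeeping on multiplicities governed by the integrality of $d$ and $md$.
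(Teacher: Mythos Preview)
Your argument is correct and rests on the same two substantive observations as the paper's proof: that $(e_2)$ forces every singular branch to carry the identical two-generator semigroup $\langle q,p\rangle$ (with $d=p/q$ reduced), and that integrality of $d(C_i,C_j)=(C_i,C_j)_0$ for smooth $C_i,C_j$ rules out two smooth branches once $d\notin\N$. Where you differ is organisation. You partition the forward direction by the arithmetic trichotomy on $d$ and $md$ that already labels the three outcomes, and then read off the branch structure (number of smooth branches $t\in\{0,1\}$, multiplicities $m=sq+t$, etc.) from that. The paper instead partitions by branch structure---all smooth, all singular, or mixed---and then deduces in each case which of (1), (2), (3) one lands in and what the integrality status of $d$ and $md$ must be. Your route makes the numerology ($r=\gcd(m,md)$, $m/r=q$, $md/r=p$, pairwise intersection $m^2d/r^2=pq$) fall out mechanically from $m=sq+t$ and $md=sp+tp/q$; the paper's route is shorter to state but leaves these verifications implicit. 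Either organisation is fine.
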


\begin{proof}
 If one of conditions $(1),(2)$ or $(3)$ is satisfied then a direct calculation shows that $\mu(C)=(d(C)m(C)-1)(m(C)-1)$.

Suppose that $C=\bigcup_{i=1}^rC_i$ satisfy the equality $\mu(C)=(d(C)m(C)-1)(m(C)-1)$.
By Proposition \ref{Milnor} the conditions $(e_1)$ and $(e_2)$ are satisfied. Let us consider three cases:

\medskip

Case 1: All branches $C_i$ are smooth. Then $C$ is of type $(1)$ by $(e_1)$.

\medskip

Case 2: All branches $C_i$ are singular. Then the branches $\{C_i\}_i$ have the same semigroup $\langle v_0,v_1 \rangle$ and according to $(e_2)$ $d(C_i)=d(C)$ for all $i\in\{1,\ldots,r\}$. Clearly, we have $m(C)=\sum_{i=1}^rm(C_i)=rv_0$ and $m(C)d(C)=m(C)d(C_i)=rv_1$. Thus $m(C)d(C)\in \mathbb N$, $r=\gcd(m(C),m(C)d(C))$ and it is easy to see that $C$ is of type $(2)$.

\medskip

Case 3: Neither Case $1$ nor Case $2$ holds, thus $r>1$. We may assume that $C_1$ is smooth and $C_2$ is singular. If $r>2$ then all branches $C_i$ for $i\geq 3$ are singular. In fact, we have by $(e_1)$: $d(C_1,C_i)=d(C_2,C_i)=d(C_1,C_2)=d(C)$ and by $(e_2)$: $d(C)=d(C_2)\not\in \mathbb N$. Thus $d(C_1,C_i)\not\in \mathbb N$ and $C_i$ are singular for all $i\geq 3$. Let $L:=C_1$ and $C':=\bigcup_{i=2}^r(C_i,0)$. Then $C=L\cup C'$ and we check using Proposition \ref{teorema} that $C$ is of type $(3)$.
\end{proof}

\begin{coro}
\label{c:equisingular}
Let $C_{1},C_{2}$ be two reduced singular curves such that $\mu(C_{i})=(d(C_{i})m(C_{i})-1)(m(C_{i})-1)$ for $i\in \{1,2\}$. Then $C_{1}$ and $C_{2}$ are equisingular if and only if $(m(C_{1}),d(C_{1}))=(m(C_{2}),d(C_{2}))$.
\end{coro}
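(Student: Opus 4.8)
The plan is to read off both implications from the structural classification in Proposition \ref{Eggers2}, together with the fact that both $m(C)$ and $d(C)$ are equisingularity invariants. The forward implication is immediate: if $C_1$ and $C_2$ are equisingular, then $m(C_1)=m(C_2)$, since the multiplicity is an equisingularity invariant (Preliminaries), and $d(C_1)=d(C_2)$, since the contact exponent is an equisingularity invariant (the Corollary following Proposition \ref{teorema}); hence $(m(C_1),d(C_1))=(m(C_2),d(C_2))$.

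For the converse, write $(m,d):=(m(C_1),d(C_1))=(m(C_2),d(C_2))$. Each $C_i$ attains the bound of Proposition \ref{Milnor}, so Proposition \ref{Eggers2} applies to both and assigns to each exactly one of the types (1), (2), (3). The type is decided solely by the arithmetic of $(m,d)$ --- type (1) when $d\in\mathbb N$, type (2) when $d\notin\mathbb N$ but $md\in\mathbb N$, and type (3) when $md\notin\mathbb N$ --- so $C_1$ and $C_2$ lie in the same type. It then suffices to check that, inside each type, Proposition \ref{Eggers2} determines the full equisingularity data of the curve as a function of $(m,d)$ alone: the number $r$ of branches, the semigroup of each branch, and every pairwise intersection multiplicity $(C_i,C_j)_0$. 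In type (1) the curve has $r=m$ smooth branches (using $m(C)=\sum_i m(C_i)$) meeting pairwise in $d$; in type (2) it has $r=\gcd(m,md)$ branches of common semigroup $\langle m/r,\,md/r\rangle$ meeting pairwise in $m^2d/r^2$. In both cases all the data depends only on $(m,d)$.

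The only step that is not pure bookkeeping is type (3), where $C=L\cup C'$ with $L$ smooth and $C'$ of type (2) with $(m(C'),d(C'))=(m-1,d)$. The semigroups and the mutual intersections of the branches of $C'$ are already fixed by the type-(2) analysis of $C'$, so the remaining task is to pin down the numbers $(L,C_i')_0$. Here the maximal-contact clause is essential: since $L$ has maximal contact with each branch $C_i'$, Lemma \ref{maximal} forces $(L,C_i')_0$ to equal the second semigroup generator $v_1$ of $C_i'$, namely $(m-1)d/\gcd(m-1,(m-1)d)$, which again depends only on $(m,d)$. The main obstacle is precisely this conversion of ``maximal contact'' into an explicit intersection number via Lemma \ref{maximal}. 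With every branch semigroup and every pairwise intersection number thus determined by $(m,d)$, the equisingularity criterion of the Preliminaries (conditions (1) and (2): branchwise equisingularity together with equal pairwise intersection numbers) yields that $C_1$ and $C_2$ are equisingular, completing the proof.
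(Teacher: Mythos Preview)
Your proof is correct and follows exactly the approach intended by the paper: the corollary is stated immediately after Proposition~\ref{Eggers2} with no separate proof, so it is meant to be read off from that classification together with the equisingularity invariance of $m(C)$ and $d(C)$. Your argument spells this out carefully, including the one nontrivial point in type~(3), where Lemma~\ref{maximal} converts the maximal-contact hypothesis into the explicit value $(L,C_i')_0=v_1$.
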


\begin{coro}
\label{entero}
Let $C$ be a reduced singular curve. Suppose that $\mu(C)=(d(C)m(C)-1)(m(C)-1)$ and $m(C)d(C)\not\in \N$. Then $(m(C)-1)d(C)\in \N$.
\end{coro}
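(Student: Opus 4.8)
The goal is to prove Corollary~\ref{entero}: under the hypotheses $\mu(C)=(d(C)m(C)-1)(m(C)-1)$ and $m(C)d(C)\notin\mathbb{N}$, we have $(m(C)-1)d(C)\in\mathbb{N}$. My plan is to extract the structural information from Proposition~\ref{Eggers2} and then read off the integrality statement directly.

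First I would observe that since $m(C)d(C)\notin\mathbb{N}$, neither condition~(1) nor condition~(2) of Proposition~\ref{Eggers2} can hold for $C$: in case~(1) we have $d(C)\in\mathbb{N}$, whence $m(C)d(C)\in\mathbb{N}$, and in case~(2) the hypothesis explicitly requires $m(C)d(C)\in\mathbb{N}$. Therefore the equality $\mu(C)=(d(C)m(C)-1)(m(C)-1)$ together with $m(C)d(C)\notin\mathbb{N}$ forces $C$ to be of type~(3). This is the key reduction, and it is immediate from the trichotomy of Proposition~\ref{Eggers2}.

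Next I would use the explicit description of type~(3) curves. By Proposition~\ref{Eggers2}(3), there is a smooth curve $L$ with $C=L\cup C'$, where $C'$ is of type~(2) satisfying $d(C')=d(C)$ and $m(C')=m(C)-1$. Since $C'$ is of type~(2), its invariant product $m(C')d(C')$ is an integer by the defining condition of type~(2). Substituting $d(C')=d(C)$ and $m(C')=m(C)-1$, this says precisely that $(m(C)-1)d(C)=m(C')d(C')\in\mathbb{N}$, which is exactly the assertion to be proved.

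The argument is short because all the genuine work has been done in Proposition~\ref{Eggers2}, whose proof unpacks the conditions $(e_1)$ and $(e_2)$ from Proposition~\ref{Milnor} into the concrete semigroup and intersection data. The only point requiring a moment of care is verifying that the three cases of Proposition~\ref{Eggers2} are genuinely exhaustive and mutually exclusive as governed by whether $d(C)\in\mathbb{N}$ and whether $m(C)d(C)\in\mathbb{N}$; given the hypothesis $m(C)d(C)\notin\mathbb{N}$, case~(3) is the unique survivor, so I anticipate no real obstacle. The potential subtlety is purely bookkeeping: making sure that the type~(2) curve $C'$ appearing inside a type~(3) curve indeed inherits the integrality $m(C')d(C')\in\mathbb{N}$, which is built into the statement of type~(2) rather than something that needs reproving here.
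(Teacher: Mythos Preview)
Your proof is correct and is precisely the argument the paper intends: the corollary is stated immediately after Proposition~\ref{Eggers2} without an explicit proof, and the deduction is exactly the one you give---rule out types~(1) and~(2) since $m(C)d(C)\notin\N$, land in type~(3), and read off $(m(C)-1)d(C)=m(C')d(C')\in\N$ from the fact that $C'$ is of type~(2).
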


To compute $\mu(C)$ one can use Pham's formula.\\

\begin{prop}[\cite{Pham}]
Let $C=\bigcup_{i}^{t}C_{i}$, where $C_{i}$ are unitangent and the tangents to $C_{i}$ and $C_{j}$ are different for $i\neq j$. Then 
\[\mu(C)+t(C)-1=m(C)(m(C)-1)+\sum_{k=1}^t \mu(\widehat{C}_k).\]
\end{prop}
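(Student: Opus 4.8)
The plan is to reduce Pham's formula to its one--branch case and then to recover the general statement by a purely additive computation built on the additivity formula $\mu(\bigcup_i C_i)+r-1=\sum_i\mu(C_i)+2\sum_{i<j}(C_i,C_j)_0$ (Lemma \ref{Milnor}), on Max Noether's theorem, and on the behaviour of the intersection multiplicity for disjoint tangent cones. Concretely, I would write each unitangent piece as a union of branches $C_k=\bigcup_{j=1}^{r_k}C_{kj}$ and set $m_{kj}=m(C_{kj})$, $m_k=m(C_k)$, $m=m(C)=\sum_k m_k$, $r=r(C)=\sum_k r_k$ and $t=t(C)$. Applying the additivity formula to $C=\bigcup_{k,j}C_{kj}$, the pairwise intersection numbers split into two groups. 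For two branches inside the same $C_k$ the tangents coincide, so Max Noether's theorem (the identity $(A,B)_0=m(A)m(B)+(\widehat A,\widehat B)_0$ used in the proof of Lemma \ref{Noether}) applies; for branches lying in different pieces $C_k,C_l$ the tangent cones are disjoint, so $(C_{kj},C_{li})_0=m_{kj}m_{li}$ by the equality case recalled in the Preliminaries. This is exactly where the hypothesis of distinct tangents is used, and it is the reason why only the transforms $\widehat C_k$, and not the transform of all of $C$, survive in the final formula.

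Next I would collect terms. Applying the additivity formula to each $\widehat C_k=\bigcup_j\widehat C_{kj}$ (which again has $r_k$ branches, so $\sum_k r_k=r$) repackages $\sum_j\mu(\widehat C_{kj})+2\sum_{j<i}(\widehat C_{kj},\widehat C_{ki})_0$ as $\mu(\widehat C_k)+r_k-1$. The purely numerical contributions combine through $2\sum_{k<l}m_km_l=m^2-\sum_k m_k^2$ and $2\sum_{j<i}m_{kj}m_{ki}=m_k^2-\sum_j m_{kj}^2$, which make all the sums $\sum_{k,j}m_{kj}^2$ cancel. Substituting the one--branch identity
\[
\mu(B)=m(B)\bigl(m(B)-1\bigr)+\mu(\widehat B)
\]
for each $B=C_{kj}$ and simplifying, every branch datum $m_{kj}$ disappears, the terms $r_k-1$ assemble into $r-t$, and one is left precisely with $\mu(C)+t-1=m(m-1)+\sum_k\mu(\widehat C_k)$. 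This bookkeeping is routine once the two intersection--number rules above are in place.

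The whole statement therefore rests on the displayed one--branch identity, which is Pham's formula for $t=1$ with $C$ a single branch, and this is the step I expect to be the real obstacle. For a branch $B$ one has $\mu(B)=c(B)$ by Lemma \ref{Milnor}, so the identity is equivalent to $c(B)-c(\widehat B)=m(B)(m(B)-1)$. In the equimultiple case $d(B)\ge 2$ I would prove this from the conductor formula $c(B)=\sum_k(n_k-1)v_k-v_0+1$ together with the transformation law $\bar v_k=v_k-v_0\,n_1\cdots n_{k-1}$ of the generators under one blowing up (extending Lemma \ref{prop:Angermuller}); then $c(B)-c(\widehat B)=v_0\sum_k(n_k-1)n_1\cdots n_{k-1}$, and the sum telescopes to $n_1\cdots n_g-1=v_0-1$, giving $v_0(v_0-1)=m(B)(m(B)-1)$. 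The delicate point is the multiplicity--dropping case $d(B)<2$, where Lemma \ref{prop:Angermuller} identifies only $m(\widehat B)=v_1-v_0$ and the transform of the higher generators is not explicit. The cleanest way to cover both cases at once is to pass to the delta invariant $\dim_K\overline{{\cal O}}_B/{\cal O}_B$: from $c(B)=2\dim_K\overline{{\cal O}}_B/{\cal O}_B$ (symmetry of the semigroup of a branch) and the classical blowing--up formula $\dim_K\overline{{\cal O}}_B/{\cal O}_B=\frac{1}{2}m(B)(m(B)-1)+\dim_K\overline{{\cal O}}_{\widehat B}/{\cal O}_{\widehat B}$, valid in every characteristic, the one--branch identity follows with no case distinction.
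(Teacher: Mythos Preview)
Your proposal is correct and follows essentially the same route as the paper. The paper organizes the argument as a three-step hierarchy (first a branch, then a unitangent curve, then the general case), whereas you decompose all the way down to branches in one shot and regroup; the ingredients---the additivity formula of Lemma~\ref{Milnor}, Max Noether for same-tangent pairs, and $(C_{kj},C_{li})_0=m_{kj}m_{li}$ for transversal pairs---are identical. For the one-branch identity $c(B)-c(\widehat B)=m(B)(m(B)-1)$, the paper simply quotes \cite[Korollar II.1.8]{Angermuller} and does not argue further, so your $\delta$-invariant justification (via $c(B)=2\,\dim_K\overline{\mathcal O}_B/\mathcal O_B$ and the blow-up formula for the $\delta$-invariant) is more detail than the paper provides and neatly avoids the case distinction you were worried about.
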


\begin{proof}
We distinguish three cases.\\

Suppose that $C$ is irreducible. Then $\mu(C)=m(C)(m(C)-1)+\mu(\widehat{C})$ by the well-known formula $c(C)=m(C)(m(C)-1)+c(\widehat{C})$ (see \cite[Korollar II.1.8]{Angermuller}).\\

Suppose now that $C$ is unitangent and let $C=\bigcup_{i}^{r}C_{i}$, where $C_{i}$ are irreducible and let $\overline  m_i=m(C_i)$ for $i\in\{1,\ldots,r\}$. Then
\begin{eqnarray*}
\mu(C)+r-1 & = & \sum_{i=1}^r \mu(C_i) +2\sum_{1\leq i<j\leq r}(C_i,C_j)_0 \\
 & = & \sum_{i=1}^t \left(\overline  m_i (\overline  m_i-1)+\mu(\widehat{C_{i}})\right)+2\sum_{1\leq i<j\leq r}\left(\overline  m_i\overline  m_j+(\widehat{C_{i}},\widehat{C_{j}})_{0} \right)\\
 & = &  \sum_{i=1}^r\overline  m_i (\overline  m_i-1)+2\sum_{1\leq i<j\leq r}\overline  m_i\overline  m_j+\sum_{i=1}^r\mu(\widehat{C_{i}})+2\sum_{1\leq i<j\leq r}(\widehat{C_{i}},\widehat{C_{j}})_{0}\\
  & = &  \sum_{i=1}^r\overline  m_i (\overline  m_i-1)+2\sum_{1\leq i<j\leq r}\overline  m_i\overline  m_j+\mu(\cup_{i=1}^{r}\widehat{C_{i}})+r-1\\
  & = &  \sum_{i=1}^r\overline  m_i (\overline  m_i-1)+2\sum_{1\leq i<j\leq r}\overline  m_i\overline  m_j+\mu(\widehat{\cup_{i=1}^{r}{C_{i}}})\\
 & = & m(C)(m(C)-1)+\mu(\widehat{C})+r-1.
\end{eqnarray*}

Finally suppose that $C=\bigcup_{i}^{t}C_{i}$, where $C_{i}$ are unitangent and the tangents to $C_{i}$ and $C_{j}$ are different for $i\neq j$. Put $\overline  m_i=m(C_i)$ for $i\in\{1,\ldots,t\}$. Then
\begin{eqnarray*}
\mu(C)+t-1 & = & \sum_{i=1}^t \mu(C_i) +2\sum_{1\leq i<j\leq t}(C_i,C_j)_0 \\
 & = & \sum_{i=1}^t \left(\overline  m_i (\overline  m_i-1)+\mu(\widehat{C_{i}})\right)+2\sum_{1\leq i<j\leq t}\overline  m_i\overline  m_j\\
 & = &  \sum_{i=1}^t\mu(\widehat{C_{i}})+ \sum_{i=1}^t(\overline m_{i})^{2}+2\sum_{1\leq i<j\leq t}\overline  m_i\overline  m_j-\sum_{i=1}^t\overline m_{i}\\
 & = & m(C)(m(C)-1)+ \sum_{i=1}^t\mu(\widehat{C_{i}}).
\end{eqnarray*}
\end{proof}

\section{Contact exponents of higher order}
\label{sect5}
Let ${\cal B}_k$ be the family of branches having at most  $k-1$ Zariski pairs. If $C$ is a reduced curve we put 
\[d_k(C):=\hbox{\rm sup}\{d(C,W)\;:\;W\in {\cal B}_k\}=d(C,{\cal B}_k).\]

Observe that  $d_1(C)=d(C)$.

A  branch $D\in {\cal B}_k$ has $k$-maximal contact with $C$ if  $d(C,D)=d_k(C)$.\\

The concept of contact exponent of higher order was studied by Lejeune-Jalabert \cite{LJ} and Campillo \cite{Campillo-libro}.

\begin{lema}
\label{lemak1}
Let $C:\{f=0\}$ be  a singular branch with $\Gamma(C)=\langle v_{0},v_{1},\ldots, v_{g}\rangle$.\\There exist irreducible power series $f_{0},\ldots,f_{g-1}$ such that $\ord f_{k-1}=\frac{v_{0}}{e_{k-1}}$ and $(f,f_{k-1})_{0}=v_{k}$.
\end{lema}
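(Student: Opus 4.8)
The statement is the classical construction of characteristic \emph{semiroots} (approximate roots), recast for arbitrary characteristic. Writing $j=k-1$, the claim is that for each $0\le j\le g-1$ there is an irreducible $f_j$ with $\ord f_j=v_0/e_j$ and $(f,f_j)_0=v_{j+1}$. Since $e_0=v_0$, the case $j=0$ reads $\ord f_0=1$ and $(f,f_0)_0=v_1$, so the first semiroot is exactly the smooth branch of maximal contact furnished by Lemma \ref{maximal}. The plan is to construct all the $f_j$ simultaneously by strong induction on the conductor degree $c(C)$, using the strict quadratic transform together with Max Noether's formula rather than any parametrization, so that the argument stays valid in every characteristic. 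As $C$ is singular we have $m(C)\ge 2$, and the relation $c(C)=m(C)(m(C)-1)+c(\widehat C)$ recalled in the proof of Pham's formula gives $c(\widehat C)<c(C)$; the induction bottoms out when $\widehat C$ is smooth, which forces $\Gamma(C)=\langle v_0,v_0+1\rangle$ and $g=1$, a case settled directly by Lemma \ref{maximal}.

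For the inductive step I would first treat the case $d(C)\ge 2$, i.e.\ $v_1-v_0>v_0$. By Lemma \ref{prop:Angermuller}, $\widehat C$ is a branch of the \emph{same} multiplicity $v_0$ with $\Gamma(\widehat C)=\langle v_0,\,v_1-v_0,\ldots\rangle$; one checks that the integers $e_k$ are unchanged, that the higher generators transform by $\widehat v_k=v_k-v_0^2/e_{k-1}$, and hence that $g$ is unchanged. The induction hypothesis then supplies semiroots $\widehat f_0,\ldots,\widehat f_{g-1}$ of $\widehat C$ with $\ord\widehat f_j=v_0/e_j$ and $(\widehat f,\widehat f_j)_0=\widehat v_{j+1}$, where $\widehat f$ defines $\widehat C$. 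I would take $f_j$ to be the blow-down of $\widehat f_j$ along the common tangent of $\widehat C$: the blow-down of a branch is a branch, it is tangent to $C$, and the construction preserves the multiplicity, so $\ord f_j=v_0/e_j$. Applying Max Noether's formula $(C,D)_0=m(C)m(D)+(\widehat C,\widehat D)_0$ (valid for a common tangent, as recalled in the proof of Lemma \ref{Noether}) yields
\[
(f,f_j)_0=v_0\cdot\frac{v_0}{e_j}+(\widehat f,\widehat f_j)_0=\frac{v_0^2}{e_j}+\Bigl(v_{j+1}-\frac{v_0^2}{e_j}\Bigr)=v_{j+1},
\]
which is precisely the required value, and irreducibility of $f_j$ is automatic.

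The remaining case $d(C)<2$, i.e.\ $v_1-v_0<v_0$, is where the real work lies. Here Lemma \ref{prop:Angermuller} only asserts $m(\widehat C)=v_1-v_0<v_0$: the multiplicity strictly drops, the roles of $v_0$ and $v_1$ are exchanged at the next level, and the generators of $\Gamma(\widehat C)$ are governed by a Euclidean-type recursion so that the index of the semiroot meeting a given generator is shifted. The hard part will be this bookkeeping — pinning down exactly which generator of $\Gamma(\widehat C)$ each $\widehat f_j$ must intersect, and checking that the blow-down recovers simultaneously the correct order $v_0/e_j$ and, through Max Noether, the correct intersection number $v_{j+1}$ — organised so that after finitely many transforms one always falls back into the previous, easier case or into the base case. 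Once the two cases are reconciled the induction closes; I note that in characteristic zero the same $f_j$ can be obtained more transparently as the minimal equation of the truncation of a Puiseux parametrization of $C$ below the $(j+1)$-th characteristic exponent, and in positive characteristic by the analogous truncation of a Hamburger--Noether expansion, which provides a useful check on the formulae above.
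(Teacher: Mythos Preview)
Your approach is quite different from the paper's, and substantially more laborious. The paper does not construct the $f_{k-1}$ by induction at all: after arranging $(f,x)_0=\ord f$, it simply invokes \cite[Theorem~3.2]{GB-P1}, which already supplies irreducible distinguished polynomials $f_0,\dots,f_{g-1}$ with $(f_{k-1},x)_0=v_0/e_{k-1}$ and $(f,f_{k-1})_0=v_k$. The only thing left is to show $\ord f_{k-1}=(f_{k-1},x)_0$, and this is done in two lines with the logarithmic distance: from $\ord f_{k-1}\le (f_{k-1},x)_0=v_0/e_{k-1}$ one gets $d(f,f_{k-1})\ge e_{k-1}v_k/v_0^{2}\ge v_1/v_0>1=d(f,x)$, and then the ultrametric property $(\delta_3)$ forces $d(f_{k-1},x)=d(f,x)=1$, i.e.\ $(f_{k-1},x)_0=\ord f_{k-1}$. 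So the paper's proof is essentially a citation plus a short verification using the very tool (the log-distance $d$) the paper is built around.

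Your inductive blow-up/blow-down scheme can in principle be made to work and has the merit of being self-contained, but as written it has real gaps. In the case $d(C)\ge 2$ you assert that the blow-down ``preserves the multiplicity''; this is true only when $\widehat f_j$ is transverse to the exceptional divisor $E$, a point you do not address (it does hold, via $d(\widehat C,\widehat f_j)>1=d(\widehat C,E)$ and the ultrametric, but it has to be said). More seriously, the case $d(C)<2$ is not a proof but a description of difficulties: when the multiplicity drops, the number of generators of $\Gamma(\widehat C)$ may change, the index correspondence between the $\widehat f_j$ and the $f_j$ is genuinely non-trivial, and the ``Euclidean-type recursion'' you allude to is exactly the Hamburger--Noether machinery. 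Completing that sketch amounts to reproving the cited theorem from scratch, which is considerable work for something the paper dispatches by reference.
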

\begin{proof} 
We may assume that $(f,x)_{0}=\ord f$. According to \cite[Theorem 3.2]{GB-P1} there exist distinguished polynomials 
$f_{0},\ldots,f_{g-1}$ such that $(f_{k-1},x)_{0}=\frac{v_{0}}{e_{k-1}}$ and $(f,f_{k-1})_{0}=v_{k}$. Consider the sequence $d(f,x)=1$, $d(f_{k-1},x)=\frac{(f_{k-1},x)_{0}}{\ord f_{k-1}}$ and $d(f,f_{k})=\frac{v_{k}}{v_{0}\frac{v_{0}}{e_{k-1}}}=\frac{e_{k-1}v_{k}}{(v_{0})^{2}}$. Since $d(f_{k-1},x)=\frac{e_{k-1}v_{k}}{(v_{0})^{2}}\geq \frac{e_{0}v_{1}}{(v_{0})^{2}}=\frac{v_{1}}{v_{0}}>1$ we have $d(f_{k-1},x)=d(f,x)=1$, that is $(f_{k-1},x)_{0}=\ord f_{k-1}$.
\end{proof}

\begin{lema}
\label{lemak2} Let $C:\{f=0\}$ be  a singular branch with $\Gamma(C)=\langle v_{0},v_{1},\ldots, v_{g}\rangle$.
If $E$ is a branch such that $d(C,E)>\frac{e_{k-1}v_{k}}{(v_{0})^{2}}$ then $E$ has at least $k$ Zariski pairs.
\end{lema}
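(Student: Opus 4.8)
The plan is to prove the statement directly, by induction on $k$: I show that $d(C,E)>\frac{e_{k-1}v_k}{(v_0)^2}$ forces $E$ to have at least $k$ Zariski pairs. For $k=1$ the threshold is $\frac{e_0v_1}{(v_0)^2}=\frac{v_1}{v_0}=d(C)$ by Lemma \ref{maximal}, so if $E$ were smooth (no Zariski pairs) we would have $d(C,E)\le d(C)$, contradicting the hypothesis; hence $E$ is singular. The key preliminary observation, valid without induction, is that the hypothesis pins down the maximal contact of $E$. Indeed, since the thresholds increase with $k$ we have $d(C,E)>\frac{v_1}{v_0}=d(C)$, so $(C,E)_0>m(C)m(E)$ and $C,E$ share a tangent. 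Taking a smooth $W_0$ with $d(C,W_0)=\frac{v_1}{v_0}$ (Lemma \ref{maximal}) and applying $(\delta_3)$ to $C,E,W_0$ gives $d(E,W_0)=\frac{v_1}{v_0}$; if some smooth $W$ had $d(E,W)>\frac{v_1}{v_0}$, then $(\delta_3)$ applied to $E,W,W_0$ would force $d(W,W_0)=\frac{v_1}{v_0}$, which is impossible because $d(W,W_0)=(W,W_0)_0\in\N$ while $\frac{v_1}{v_0}\notin\N$ (as $v_1\notin\N v_0$). Hence $d(E)=d(C)$, and comparing the two fractions in lowest terms shows that $E$ is singular with the same first Zariski pair as $C$.

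For the inductive step ($k\ge2$) I would exploit that $d(C)=d(E)$ makes $C$ and $E$ transform the same way under a quadratic transform: both preserve their multiplicity when $d(C)\ge2$ and both lower it when $d(C)<2$, and since they share a tangent Max Noether yields $(\widehat C,\widehat E)_0=(C,E)_0-m(C)m(E)$. The plan is to blow up repeatedly so as to resolve the first characteristic exponent of the two branches at once; the pattern of this resolution (how many blow-ups, and at which of them the multiplicity drops) is governed solely by the common value $d(C)=d(E)$, so the two processes run in step and $C,E$ keep a common tangent throughout. They are replaced by branches $C',E'$ each having one Zariski pair fewer, with $v_k$ becoming the generator $v'_{k-1}$ of $C'$, and the point to verify is that the hypothesis transforms exactly into $d(C',E')>\frac{e'_{k-2}v'_{k-1}}{(v'_0)^2}$. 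The inductive hypothesis applied to $(C',E',k-1)$ then gives that $E'$ has at least $k-1$ Zariski pairs, whence $E$ has at least $k$, as wanted.

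The transparent ingredient is a multiplicity-preserving blow-up, where Theorem \ref{th:Hironaka} (ii) together with Lemma \ref{prop:Angermuller} gives $\frac{\widehat e_{k-1}\widehat v_k}{(\widehat v_0)^2}=\frac{e_{k-1}v_k}{(v_0)^2}-1$, matching the drop $d(\widehat C,\widehat E)=d(C,E)-1$ from Lemma \ref{Noether}, so the strict inequality survives each such step. The main obstacle is the multiplicity-dropping part of the resolution ($d(C)<2$): Lemma \ref{prop:Angermuller} records only the leading generator $v_1-v_0$ of $\widehat C$, so I would have to establish separately the full transformation of the semigroup (hence of the $e_j$ and $v_j$) under one blow-up, control $d(\widehat C,\widehat E)=\frac{(C,E)_0-m(C)m(E)}{m(\widehat C)m(\widehat E)}$, and check that along the whole Euclidean chain attached to the first exponent the quantity $d(C,E)-\frac{e_{k-1}v_k}{(v_0)^2}$ stays positive and rescales to the level-$(k-1)$ threshold of $C'$. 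This bookkeeping, carried out without Newton diagrams, is where the real work lies.
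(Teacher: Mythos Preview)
The paper does not actually prove this lemma: its ``proof'' is the single line ``See \cite[Theorem 5.2]{GB-P1}.'' So there is no internal argument to compare against; the authors defer entirely to their earlier paper, whose proof is built on the semigroup machinery developed there rather than on quadratic transforms.

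Your approach is therefore a genuinely different, self-contained route. The case $k=1$ is complete and correct: the threshold equals $d(C)$, so a smooth $E$ is impossible, and your ultrametric argument with Lemma~\ref{maximal} cleanly pins down $d(E)=d(C)$ and hence the first Zariski pair of $E$. The observation that the thresholds $\frac{e_{k-1}v_k}{v_0^2}$ are strictly increasing (from $e_{k-1}v_k>e_{k-2}v_{k-1}$, which follows from $n_{k-1}v_{k-1}<v_k$ and $e_{k-2}=n_{k-1}e_{k-1}$) is also right and is what makes the $k=1$ analysis available at every level.

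Where your write-up is not yet a proof is exactly where you say it is not: the inductive step. You outline the intended mechanism (blow up along the Euclidean chain attached to the common first exponent, then apply induction to $C',E'$), and the mechanism is sound in principle, but nothing is verified. Concretely, you would still need to (a) establish the full transformation of $\Gamma(C)$ under a multiplicity-dropping blow-up (Lemma~\ref{prop:Angermuller} gives only the new minimal generator), (b) track $d(C,E)$ through each such step via Noether's formula with the \emph{new} multiplicities, and (c) check that after the entire chain the inequality becomes precisely $d(C',E')>\frac{e'_{k-2}v'_{k-1}}{(v'_0)^2}$. None of this is in the paper or in your text. So as it stands the proposal is a correct base case plus a plausible but unexecuted plan; it is not wrong, but it is not a proof either.
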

\begin{proof} See \cite[Theorem 5.2]{GB-P1}.
\end{proof}
\begin{prop}
Let $C$ be a branch with $\Gamma(C)=\langle v_0,\ldots,v_{g}\rangle$. Then $d_k(C)=\frac{e_{k-1}v_{k}}{(v_0)^2}$, where $e_{k-1}=\gcd(v_0,\ldots,v_{k-1})$.
\end{prop}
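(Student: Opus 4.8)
The plan is to prove the formula $d_k(C) = \frac{e_{k-1}v_k}{(v_0)^2}$ by establishing two inequalities, using the two lemmas just proved as the lower and upper bounds respectively. Recall that $d_k(C) = d(C,\mathcal{B}_k) = \sup\{d(C,W) : W \in \mathcal{B}_k\}$, where $\mathcal{B}_k$ is the family of branches having at most $k-1$ Zariski pairs.

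For the lower bound $d_k(C) \geq \frac{e_{k-1}v_k}{(v_0)^2}$, I would exhibit an explicit branch in $\mathcal{B}_k$ achieving this contact. By Lemma \ref{lemak1}, there exists an irreducible power series $f_{k-1}$ with $\ord f_{k-1} = \frac{v_0}{e_{k-1}}$ and $(f,f_{k-1})_0 = v_k$. Let $F_{k-1}:\{f_{k-1}=0\}$ be the corresponding branch. Then $m(F_{k-1}) = \frac{v_0}{e_{k-1}}$, so
\[
d(C,F_{k-1}) = \frac{(f,f_{k-1})_0}{m(C)\,m(F_{k-1})} = \frac{v_k}{v_0 \cdot \frac{v_0}{e_{k-1}}} = \frac{e_{k-1}v_k}{(v_0)^2}.
\]
The remaining task for this direction is to verify that $F_{k-1} \in \mathcal{B}_k$, i.e.\ that $F_{k-1}$ has at most $k-1$ Zariski pairs. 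This should follow from the construction in \cite[Theorem 3.2]{GB-P1} (the distinguished polynomials there are built from the first $k-1$ characteristic data of $C$), and I would cite that fact; once it holds, $d_k(C) \geq d(C,F_{k-1}) = \frac{e_{k-1}v_k}{(v_0)^2}$.

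For the upper bound $d_k(C) \leq \frac{e_{k-1}v_k}{(v_0)^2}$, I would argue by contradiction using Lemma \ref{lemak2}. Suppose some branch $W \in \mathcal{B}_k$ satisfies $d(C,W) > \frac{e_{k-1}v_k}{(v_0)^2}$. Then by Lemma \ref{lemak2}, $W$ has at least $k$ Zariski pairs, contradicting $W \in \mathcal{B}_k$. Hence $d(C,W) \leq \frac{e_{k-1}v_k}{(v_0)^2}$ for every $W \in \mathcal{B}_k$, and taking the supremum gives $d_k(C) \leq \frac{e_{k-1}v_k}{(v_0)^2}$. Combining the two inequalities yields the desired equality, and the supremum is in fact attained by $F_{k-1}$, so condition (*) holds for $\mathcal{B}_k$ and $F_{k-1}$ has $k$-maximal contact with $C$.

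The main obstacle I anticipate is the justification that $F_{k-1} \in \mathcal{B}_k$, i.e.\ that the distinguished polynomial $f_{k-1}$ defines a branch with at most $k-1$ Zariski pairs. This is the one step not handled purely formally by the two lemmas: Lemma \ref{lemak1} gives the order and intersection data but does not, as stated, record the number of Zariski pairs of $f_{k-1}$, and Lemma \ref{lemak2} only constrains branches from the other side. I would rely on the explicit construction of the $f_{k-1}$ in \cite[Theorem 3.2]{GB-P1}, where these approximate roots are assembled from the first $k-1$ Puiseux/Zariski data of $C$ and therefore have semigroup determined by $v_0,\dots,v_{k-1}$ (suitably divided by $e_{k-1}$), giving exactly $k-1$ Zariski pairs. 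If that reference already states the characteristic data of $f_{k-1}$, the step is a direct citation; otherwise it would require a short semigroup computation, which is the only genuinely nontrivial point.
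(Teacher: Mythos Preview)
Your approach is essentially identical to the paper's: both use Lemma~\ref{lemak1} to produce a branch realizing the value $\frac{e_{k-1}v_k}{(v_0)^2}$ for the lower bound, and Lemma~\ref{lemak2} by contradiction for the upper bound. The paper simply asserts that the branch $D_{k-1}$ furnished by Lemma~\ref{lemak1} lies in $\mathcal{B}_k$, whereas you explicitly flag this as the one point requiring outside justification; that justification is indeed available from \cite[Theorem~3.2]{GB-P1}, where the semi-root $f_{k-1}$ has semigroup $\langle v_0/e_{k-1},\ldots,v_{k-1}/e_{k-1}\rangle$ and hence exactly $k-1$ Zariski pairs.
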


\begin{proof}
By Lemma \ref{lemak1} there is $D_{k-1}\in {\cal B}_k$ such that $\ord D_{k-1}=\frac{v_{0}}{e_{k-1}}$ and $(C,D_{k-1})_{0}=v_{k}$. Then $d_{k}(C)\geq d(C,D_{k})=\frac{(C,D_{k-1})_{0}}{\ord C \ord D_{k-1}}=\frac{e_{k-1}v_{k}}{(v_{0})^{2}}$. Suppose now that there is a branch $E\in {\cal B}_k$ such that $d(C,E)>\frac{e_{k-1}v_{k}}{(v_{0})^{2}}$. Then $\frac{(C,E)_{0}}{v_{0}\ord E}>\frac{e_{k-1}v_{k}}{(v_{0})^{2}}$, hence $\frac{(C,E)_{0}}{\ord E}>\frac{e_{k-1}v_{k}}{v_{0}}$. By Lemma \ref{lemak2} we conclude that $E$ has at least $k$ Zariski pairs which is a contradiction (since  $E\in {\cal B}_k$).
\end{proof}

\begin{prop}
Let $C$ be a reduced curve with $r>1$ branches $C_i$. 
Then
\[d_{k}(C)=\hbox{\rm inf}\{\hbox{\rm inf}_i\{d_{k}(C_i)\}, \hbox{\rm inf}_{i,j}
\{d(C_i,C_j)\}\}.\]
\end{prop}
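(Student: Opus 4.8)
The plan is to recognize this statement as the instance of Theorem~\ref{principal} obtained by taking $\delta=d$, the order of contact of Example~\ref{ej}(1), and ${\cal B}={\cal B}_k$, the family of branches with at most $k-1$ Zariski pairs. By definition $d_k(C)=d(C,{\cal B}_k)$ and $d_k(C_i)=d(C_i,{\cal B}_k)$, so the asserted equality is word for word the conclusion of Theorem~\ref{principal} for these data. Consequently the entire content of the proof reduces to verifying that the hypothesis~(*) of Theorem~\ref{principal} holds for the family ${\cal B}_k$.

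Thus the only thing I would actually check is that, for every branch $C$, the supremum defining $d(C,{\cal B}_k)$ is attained by some member of ${\cal B}_k$. I would argue by two cases. If $C$ itself lies in ${\cal B}_k$ --- that is, if $C$ has at most $k-1$ Zariski pairs, in particular if $C$ is smooth --- then $d(C,{\cal B}_k)=+\infty=d(C,C)$ by~$(\delta_1)$, and one may take $W_0=C$. If instead $C\notin{\cal B}_k$, then $C$ is a singular branch whose semigroup $\Gamma(C)=\langle v_0,v_1,\ldots,v_g\rangle$ has $g\geq k$; here the preceding Proposition gives $d_k(C)=\frac{e_{k-1}v_k}{(v_0)^2}$, and its proof produces exactly the branch $D_{k-1}\in{\cal B}_k$ of Lemma~\ref{lemak1}, with $\ord D_{k-1}=\frac{v_0}{e_{k-1}}$ and $(C,D_{k-1})_0=v_k$, for which $d(C,D_{k-1})=\frac{e_{k-1}v_k}{(v_0)^2}=d_k(C)$. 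In both cases the supremum is realized, so~(*) holds.

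Having confirmed~(*), I would simply invoke Theorem~\ref{principal} to conclude
\[
d_k(C)=d(C,{\cal B}_k)=\inf\{\inf_i\{d(C_i,{\cal B}_k)\},\ \inf_{i,j}\{d(C_i,C_j)\}\}=\inf\{\inf_i\{d_k(C_i)\},\ \inf_{i,j}\{d(C_i,C_j)\}\}.
\]
The argument is essentially formal and I expect no serious obstacle: the computation of $\delta(C,{\cal B})$ for a reduced curve was done once and for all in Theorem~\ref{principal}, and the present proposition only requires feeding it the family ${\cal B}_k$. The single point that genuinely uses the earlier machinery is the attainment of the supremum for singular branches, which rests on the explicit branch $D_{k-1}$ furnished by Lemma~\ref{lemak1} and the value $\frac{e_{k-1}v_k}{(v_0)^2}$ established in the preceding Proposition.
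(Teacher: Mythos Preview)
Your proposal is correct and follows exactly the paper's approach: the paper's own proof is the single line ``Use Theorem~\ref{principal} when $\delta=d$ and ${\cal B}_{k}$ is the family of branches having at most $k-1$ Zariski pairs.'' You have merely made explicit the verification of hypothesis~(*), which the paper leaves implicit (it is immediate from the construction of $D_{k-1}$ in Lemma~\ref{lemak1} and the preceding Proposition, together with the trivial case $C\in{\cal B}_k$).
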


\begin{proof}
Use Theorem \ref{principal} when $\delta=d$ and ${\cal B}_{k}$ is the family of  branches having at most  $k-1$ Zariski pairs.
\end{proof}

\section{Polar invariants and the contact exponent}
\label{sect6}
Let $K$ be a field of characteristic zero. Let $C$ be a reduced plane singular curve and let $P(C)$ be a generic polar of $C$. Then $P(C)$ is a reduced germ of multiplicity $m(P(C))= m(C)-1$. Let $P(C)=\bigcup_{j=1}^sD_j$ be the decomposition of $P(C)$ into branches $D_j$.

\medskip
\noindent We put ${\cal Q}(C)=\left\{\frac{(C,D_j)_0}{m(D_j)}\;:\;j\in\{1,\ldots,
s\right\}$ and call the elements of ${\cal Q}(C)$ the {\em polar invariants } of 
$C$. They are equisingularity invariants of $C$ (see \cite{Teissier}, \cite{Gw-P}). 
In particular if $C$ is a branch then 

\[Q(C):=\{m(C)d_k(C)\}_{k=1}^{g}.\]

\medskip

Let us consider the minimal polar invariant 
$\alpha(C):=\hbox{\rm inf }{\cal Q}(C)$. 

\begin{prop}
\label{Lenarcik-Masternak-Ploski}
For any singular reduced germ $C$ we have $\alpha(C)=m(C)d(C)$.
\end{prop}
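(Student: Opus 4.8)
The plan is to reduce the claim to the single-branch case, where the polar invariants coincide with the higher-order contact exponents multiplied by $m(C)$, and then invoke the structural formula for $d(C)$ from Proposition \ref{teorema}. First I would treat the case where $C$ is itself a branch. Here $\mathcal{Q}(C)=Q(C)=\{m(C)d_k(C)\}_{k=1}^g$ by the displayed identity in this section, so $\alpha(C)=\inf_k\{m(C)d_k(C)\}=m(C)\inf_k\{d_k(C)\}$. Since $d_1(C)=d(C)$ and the sequence $d_k(C)=\frac{e_{k-1}v_k}{(v_0)^2}$ is non-decreasing in $k$ (because $e_{k-1}v_k$ grows: the generators $v_k$ are strongly increasing while the $e_{k-1}$ decrease, and the explicit formula $d_k(C)=\frac{e_{k-1}v_k}{v_0^2}$ together with $n_{k-1}v_{k-1}\le v_k$ gives $d_k(C)\ge d_{k-1}(C)$), the infimum is attained at $k=1$. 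Hence $\alpha(C)=m(C)d_1(C)=m(C)d(C)$, settling the irreducible case.

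For the general case $C=\bigcup_{i=1}^r C_i$ with $r>1$, the key is that a generic polar $P(C)$ decomposes according to the branches, and its branches fall into two kinds: those ``curving along'' a single $C_i$ and those separating pairs of branches. The plan is to show that the set of quotients $\frac{(C,D_j)_0}{m(D_j)}$ arising from $P(C)$ has infimum equal to $\inf\{\inf_i\{m(C_i)d(C_i)\cdot\tfrac{m(C)}{m(C_i)}\},\ \inf_{i\ne j}\{(C_i,C_j)_0/\cdots\}\}$, matching term by term the formula $d(C)=\inf\{\inf_i d(C_i),\inf_{i,j}d(C_i,C_j)\}$ of Proposition \ref{teorema} after multiplying through by $m(C)$. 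Concretely, I would express $(C,D_j)_0=\sum_i(C_i,D_j)_0$ and bound each quotient from below using $d(C_i,D_j)\ge$ the relevant contact, so that the smallest quotient is governed either by some $d(C_i)$ (when $D_j$ is a polar branch of maximal contact with a single $C_i$) or by some $d(C_i,C_j)$ (when $D_j$ is a ``splitting'' branch of the polar lying between two components).

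The main obstacle I anticipate is the precise decomposition of the generic polar $P(C)$ and the identification of which polar branch realizes each candidate value. This is exactly the content that classically requires Newton-diagram or Puiseux techniques, but since the paper's philosophy is to avoid Newton's diagrams, I would instead lean on the equisingularity invariance of the polar invariants (cited to \cite{Teissier}, \cite{Gw-P}) together with the explicit single-branch computation and the ultrametric calculus for $d$ developed in Section \ref{sect2}. The cleanest route is probably to combine Proposition \ref{teorema} with the branch-case identity $\alpha(C_i)=m(C_i)d(C_i)$ and the known additivity $(C,D_j)_0=\sum_i(C_i,D_j)_0$, checking that the infimum over polar branches $D_j$ picks out precisely $\inf\{\inf_i m(C)d(C_i),\ \inf_{i,j} m(C)d(C_i,C_j)\}=m(C)d(C)$. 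The delicate point will be ensuring that no spurious polar branch produces a value strictly below $m(C)d(C)$, which amounts to verifying that every polar quotient is bounded below by $m(C)\,d(C,D_j)\ge m(C)\,d(C)$ with equality attained; this lower bound should follow from the definition of $d(C)$ as a supremum of contacts together with the second part of Lemma \ref{elemental d}.
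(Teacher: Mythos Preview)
The paper does not actually prove this proposition: its ``proof'' is a one-line citation to \cite[Theorem 2.1 (iii)]{L-M-P}, followed by the remark that an alternative argument could be built from Proposition~\ref{teorema} together with the explicit formulae for the polar invariants in \cite[Theorem 1.3]{Gw-P}. Your proposal is an attempt at that alternative route, so the comparison is between a citation and a sketch.

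Your irreducible case is fine: for a branch, $\mathcal{Q}(C)=\{m(C)d_k(C)\}_{k=1}^{g}$, and the monotonicity $d_{k-1}(C)<d_k(C)$ follows from $e_{k-2}=n_{k-1}e_{k-1}$ together with the strong increase $v_k>n_{k-1}v_{k-1}$, giving $\alpha(C)=m(C)d_1(C)=m(C)d(C)$.

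The general case, however, has a real gap. In your last paragraph you aim to bound every polar quotient from below by $m(C)\,d(C,D_j)\ge m(C)\,d(C)$ and say this ``should follow from the definition of $d(C)$ as a supremum of contacts together with the second part of Lemma~\ref{elemental d}''. The first inequality is indeed Lemma~\ref{elemental d}\,(2), but the second is backwards: $d(C)=\sup_W d(C,W)$ over \emph{smooth} $W$ gives $d(C,W)\le d(C)$ for smooth $W$, and says nothing at all about $d(C,D_j)$ for an arbitrary (possibly singular) polar branch $D_j$. There is no purely ultrametric reason why a polar branch cannot have $d(C,D_j)<d(C)$; ruling this out is exactly where nontrivial input about the polar is needed. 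Likewise, the existence of a polar branch realizing each candidate value $m(C)d(C_i)$ or $m(C)d(C_i,C_j)$ is not something the logarithmic-distance calculus of Section~\ref{sect2} can produce on its own.

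In short, the missing ingredient is precisely the external information about the structure of $\mathcal{Q}(C)$ that the paper imports from \cite{L-M-P} (or, alternatively, from \cite{Gw-P}). Your plan reproduces the skeleton the paper alludes to after the proof, but without those explicit polar-invariant formulae the lower bound $\alpha(C)\ge m(C)d(C)$ and the attainment of equality in the reducible case remain unproved.
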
  

\noindent {\bf Proof.-} See \cite[Theorem 2.1 (iii)]{L-M-P}.

\medskip
\noindent One could prove Proposition \ref{Lenarcik-Masternak-Ploski} by 
using Theorem \ref{teorema} and the explicit formulae for the polar invariants given in \cite[Theorem 1.3]{Gw-P}.

\medskip
\noindent We say that $C$ is an {\em Eggers singularity} if ${\cal Q}(C)$ has exactly one element.

\medskip
\begin{prop}
\label{Eggers}
Let $C$ be a singular reduced curve. Then $\mu(C)=(d(C)m(C)-1)(m(C)-1)$ if and only
if $C$ is an Eggers singularity.
\end{prop}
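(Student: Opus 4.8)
The plan is to relate the Milnor number to the polar invariants through the classical Teissier intersection formula for the generic polar, and then read off the equality case directly. Write $P(C)=\bigcup_{j=1}^{s}D_{j}$ for the decomposition of a generic polar into branches, so that $m(P(C))=\sum_{j=1}^{s}m(D_{j})=m(C)-1$. The starting point is the formula
\[(C,P(C))_{0}=\mu(C)+m(C)-1,\]
valid in characteristic zero (see \cite{Teissier}); since $(C,P(C))_{0}=\sum_{j=1}^{s}(C,D_{j})_{0}$, this expresses $\mu(C)$ as a sum indexed by the branches of the polar.

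Next I would invoke Proposition \ref{Lenarcik-Masternak-Ploski}, which gives $\alpha(C)=\inf {\cal Q}(C)=m(C)d(C)$. Thus each polar invariant satisfies $\frac{(C,D_{j})_{0}}{m(D_{j})}\geq m(C)d(C)$, that is $(C,D_{j})_{0}\geq m(C)d(C)\,m(D_{j})$ for every $j$. Summing over $j$ and using the two displayed identities gives
\[\mu(C)+m(C)-1=\sum_{j=1}^{s}(C,D_{j})_{0}\geq m(C)d(C)\sum_{j=1}^{s}m(D_{j})=m(C)d(C)(m(C)-1),\]
hence $\mu(C)\geq (d(C)m(C)-1)(m(C)-1)$, recovering the bound of Proposition \ref{Milnor}. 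Moreover equality holds if and only if $(C,D_{j})_{0}=m(C)d(C)\,m(D_{j})$ for all $j$, i.e. if and only if every polar invariant $\frac{(C,D_{j})_{0}}{m(D_{j})}$ equals $\alpha(C)=m(C)d(C)$.

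Finally I would translate this equality condition into the Eggers property. Since $\alpha(C)=\inf {\cal Q}(C)$, all polar invariants coincide with $\alpha(C)$ precisely when the set ${\cal Q}(C)$ consists of the single element $\alpha(C)$, that is, precisely when $C$ is an Eggers singularity; the implication in either direction is immediate once one observes that a finite set whose infimum is attained by all of its defining values is a singleton, and conversely. This establishes both directions at once: $\mu(C)=(d(C)m(C)-1)(m(C)-1)$ if and only if ${\cal Q}(C)$ has exactly one element. The main obstacle is the Teissier formula $(C,P(C))_{0}=\mu(C)+m(C)-1$: it is the only ingredient not proved in the present paper, and it is exactly where the hypothesis $\mathrm{char}\,K=0$ and the identification of $\mu(C)$ with $\dim_{K}K[[x,y]]/(\partial f/\partial x,\partial f/\partial y)$ enter. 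Everything else reduces to Proposition \ref{Lenarcik-Masternak-Ploski} and the additivity of the intersection number over the branches of the polar, so the whole argument reproves the Milnor bound and characterizes its equality case simultaneously.
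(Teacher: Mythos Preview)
Your proof is correct and follows essentially the same route as the paper: start from Teissier's formula $\mu(C)=(C,P(C))_{0}-m(C)+1$, bound $\sum_{j}(C,D_{j})_{0}$ below by $\alpha(C)\sum_{j}m(D_{j})=\alpha(C)(m(C)-1)$, and invoke Proposition~\ref{Lenarcik-Masternak-Ploski} to replace $\alpha(C)$ by $m(C)d(C)$, with equality precisely when all polar invariants coincide. The only cosmetic difference is that the paper keeps the computation in terms of $\alpha(C)$ and substitutes $m(C)d(C)$ at the very end, whereas you substitute at the start.
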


\begin{proof}

\noindent By \cite{Teissier2} Proposition 1.2 we get
\begin{eqnarray*}
\mu(C) &= &(C,P(C))_0-m(C)+1=\sum_{j=1}^s(C,D_j)_0-m(C)+1\\
 & \geq & \alpha(C)m(P(C))-m(C)+1=\alpha(C)(m(C)-1)-m(C)+1\\
 & = & (\alpha(C)-1)(m(C)-1)
\end{eqnarray*}

\noindent with equality if and only if $C$ is an Eggers singularity. We 
use Proposition \ref{Lenarcik-Masternak-Ploski}.

\end{proof}

\medskip

Proposition \ref{Eggers2} provides an explicit description of Eggers singularities.

\begin{coro}(\cite[p. 16]{Eggers})
If $C$ has exactly one polar invariant then $C$ is equisingular to $y^{n}-x^{m}=0$ or 
$y^{n}-yx^{m}=0$, for some integers $1<n<m$.
\end{coro}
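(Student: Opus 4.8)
The plan is to combine the three structural results that immediately precede the statement. First I would translate the hypothesis: by Proposition \ref{Eggers}, having exactly one polar invariant is equivalent to the extremal equality $\mu(C)=(d(C)m(C)-1)(m(C)-1)$. Hence Proposition \ref{Eggers2} applies and $C$ must be of type $(1)$, $(2)$ or $(3)$. The whole argument then reduces to exhibiting, for each type, an explicit curve of the form $y^{n}-x^{m}=0$ or $y^{n}-yx^{m}=0$ that is again an extremal (Eggers) curve with the \emph{same} pair $(m(C),d(C))$, and to invoke Corollary \ref{c:equisingular}, which guarantees that two extremal curves sharing this pair are equisingular.

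Second, I would record the two invariants of the candidate normal forms. For $f=y^{n}-x^{m}$ with $n<m$ one has $\ini f=y^{n}$, so $m(C)=n$; writing $e=\gcd(n,m)$ the series factors into $e$ branches, each of semigroup $\langle n/e,m/e\rangle$ (smooth exactly when $n\mid m$), and Proposition \ref{teorema} gives $d(C)=m/n$. For $f=y^{n}-yx^{m}=y(y^{n-1}-x^{m})$ with $n<m$ one gets $m(C)=n$, a smooth component $L:\{y=0\}$, and a singular part $C':\{y^{n-1}-x^{m}=0\}$ of multiplicity $m(C)-1=n-1$; since $L$ has maximal contact with $C'$, Proposition \ref{teorema} yields $d(C)=m/(n-1)$. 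A short computation through Lemma \ref{Milnor} and Proposition \ref{Milnor} then confirms that $y^{n}-x^{m}$ realises types $(1)$ and $(2)$ while $y^{n}-yx^{m}$ realises type $(3)$, so all three are extremal curves.

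Third, I would check that every extremal pair is attained. In type $(1)$ we have $d(C)\in\mathbb{N}$ with $d(C)\geq 2$, and the choice $n=m(C)$, $m=m(C)d(C)$ gives $y^{n}-x^{m}$ with $1<n<m$. In type $(2)$ we have $d(C)\notin\mathbb{N}$ but $m(C)d(C)\in\mathbb{N}$, and the same choice again gives $y^{n}-x^{m}$ (now $n\nmid m$). In type $(3)$ we have $m(C)d(C)\notin\mathbb{N}$; here the key input is Corollary \ref{entero}, which guarantees $(m(C)-1)d(C)\in\mathbb{N}$, so that $n=m(C)$, $m=(m(C)-1)d(C)$ produces the integral model $y^{n}-yx^{m}$ with $d(C)=m/(n-1)$. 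In each case $d(C)>1$ by Corollary \ref{rrrr}, which forces $m>n$ (the boundary value $d(C)=m(C)/(m(C)-1)$, corresponding by Lemma \ref{una sola tangente} to $C':\{y^{n-1}-x^{n}=0\}$ of semigroup $\langle n-1,n\rangle$, is the one case where the model degenerates to $y^{n}-yx^{n}$). Having matched $(m(C),d(C))$ in all cases, Corollary \ref{c:equisingular} shows $C$ is equisingular to the chosen model, which proves the corollary.

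The main obstacle I expect is the second step: verifying that the two families $y^{n}-x^{m}$ and $y^{n}-yx^{m}$ are genuinely Eggers singularities with the asserted multiplicity and contact exponent, that is, computing $d(C)$ from Proposition \ref{teorema} and confirming the extremal equality via Lemma \ref{Milnor} and Proposition \ref{Milnor}. The surjectivity bookkeeping of the third step is then routine, its only delicate ingredient being the integrality provided by Corollary \ref{entero}, which is exactly what legitimises the exponent $(m(C)-1)d(C)$ in type $(3)$.
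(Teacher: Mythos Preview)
Your proposal is correct and follows essentially the same route as the paper's proof: verify that the two model families $y^{n}-x^{m}$ and $y^{n}-yx^{m}$ are Eggers singularities with the indicated values of $(m(C),d(C))$, and then, given an Eggers curve $C$, produce a model with matching pair and invoke Corollary~\ref{c:equisingular}. The only cosmetic difference is that the paper does not pass through the trichotomy $(1)$--$(3)$ of Proposition~\ref{Eggers2} but instead splits directly on whether $m(C)d(C)\in\mathbb{N}$ (your types $(1)$ and $(2)$ together) or $m(C)d(C)\notin\mathbb{N}$ (your type $(3)$, handled via Corollary~\ref{entero}); the content is identical.
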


\begin{proof}
We check that if $C:\{y^{n}-x^{m}=0\}$ then $m(C)=n$, $d(C)=\frac{m}{n}$ and $\mu(C)=nm-n-m+1$. On the other hand if $C:\{y^{n}-yx^{m}=0\}$ then $m(C)=n$, $d(C)=\frac{m}{n-1}$ and $\mu(C)=nm-n+1$. In both cases $\mu(C)=(d(C)m(C)-1)(m(C)-1)$, that is $C$ is an Eggers singularity.\\

Now let $C$ be an Eggers singularity. If $m(C)d(C)\in \N$ then $C$ and $\{y^{m(C)}-x^{m(C)d(C)}=0\}$ are equisingular by Corollary \ref{c:equisingular}. Analogously, if 
$m(C)d(C)\not\in \N$ then, by Corollary \ref{entero}, $(m(C)-1)d(C)\in \N$ and $C$ is 
equisingular to $\{y^{m(C)}-yx^{(m(C)-1)d(C)}=0\}$.
\end{proof}

\noindent {\small  Evelia Rosa Garc\'{\i}a Barroso\\
Departamento de Matem\'aticas, Estad\'{\i}stica e I.O. \\
Secci\'on de Matem\'aticas, Universidad de La Laguna\\
Apartado de Correos 456\\
38200 La Laguna, Tenerife, Espa\~na\\
e-mail: ergarcia@ull.es}

\medskip

\noindent {\small Arkadiusz P\l oski\\
Department of Mathematics and Physics\\
Kielce University of Technology\\
Al. 1000 L PP7\\
25-314 Kielce, Poland\\
e-mail: matap@tu.kielce.pl}

\end{document}